\newtheorem{theorem}{Theorem}[section]
\newtheorem{proposition}[theorem]{Proposition}
\newtheorem{lemma}[theorem]{Lemma}
\theoremstyle{definition}
\newtheorem{example}[theorem]{Example}
\theoremstyle{remark}
\newtheorem{remark}{Remark}
\newcommand{\abs}[1]{\left\lvert #1 \right\rvert}
\newcommand{\abss}[1]{\big\lvert #1 \big\rvert}
\newcommand{\norm}[1]{\left\lVert #1 \right\rVert}
\newcommand{\norms}[1]{\big\lVert #1 \big\rVert}
\newcommand{\inner}[2]{\left\langle#1,#2\right\rangle}
\newcommand{\dual}[2]{\left\langle#1,#2\right\rangle}
\newcommand{\fF}{\mathfrak{F}}
\newcommand{\fI}{\mathfrak{I}}
\newcommand{\cD}{\mathcal{D}}
\newcommand{\cF}{\mathcal{F}}
\newcommand{\cH}{\mathcal{H}}
\newcommand{\cB}{\mathcal{B}}
\newcommand{\cM}{\mathcal{M}}
\newcommand{\cPW}{\mathcal{PW}}
\newcommand{\cHB}{\mathcal{HB}}
\newcommand{\R}{{\mathbb R}}
\newcommand{\C}{{\mathbb C}}
\newcommand{\N}{{\mathbb N}}
\newcommand{\cc}[1]{\overline{#1}}
\renewcommand{\restriction}{\mathord{\upharpoonright}}
\DeclareMathOperator{\re}{re}
\DeclareMathOperator{\im}{im}
\DeclareMathOperator{\dom}{dom}
\DeclareMathOperator{\spec}{spec}
\DeclareMathOperator*{\lspan}{span}
\DeclareMathOperator{\assoc}{assoc}
\begin{document}

\begin{titlepage}
\title
{\vspace{-1cm}
Selfadjoint extensions of the multiplication operator in de Branges
spaces as singular rank-one perturbations
\footnotetext{%
Mathematics Subject Classification(2010):
Primary
46E22, 
47A70; 
Secondary
47A55,  
47B15,  
47B25.  
}
\footnotetext{%
Keywords: de Branges spaces; singular rank-one perturbations; scale of
Hilbert spaces.
}
\\[2mm]}
\author{
\textbf{Luis O. Silva}\thanks{Partially supported by SEP-CONACYT (Mexico) under 
	Grant CB-2015 254062}
\\
\small Departamento de F\'{i}sica Matem\'{a}tica\\[-1.6mm]
\small Instituto de Investigaciones en Matem\'{a}ticas Aplicadas y
	en Sistemas\\[-1.6mm]
\small Universidad Nacional Aut\'{o}noma de M\'{e}xico\\[-1.6mm]
\small C.P. 04510, M\'{e}xico D.F.\\[-1.6mm]
\small \texttt{silva@iimas.unam.mx}
\\[4mm]
\textbf{Julio H. Toloza}\thanks{Partially supported by CONICET
	(Argentina) under Grant PIP 11220150100327CO}
\\
\small Instituto de Matem\'{a}tica de Bah\'{i}a Blanca\\[-1.6mm]
\small Universidad Nacional del Sur\\[-1.6mm]
\small Consejo Nacional de Investigaciones Cient\'{i}ficas y
       T\'{e}cnicas\\[-1.6mm]
\small Departamento de Matem\'{a}tica\\[-1.6mm]
\small Av. Alem 1253, B8000CPB Bah\'{i}a Blanca, Argentina\\[-1.6mm]
\small \texttt{julio.toloza@uns.edu.ar}}

\date{}
\maketitle

\begin{center}
\begin{minipage}{5in}
  \centerline{{\bf Abstract}} \bigskip We derive a
  description of the family of canonical selfadjoint extensions of the
  operator of multiplication in a de Branges space in terms of
  singular rank-one perturbations using distinguished elements from
  the set of functions associated with a de Branges space.  The scale
  of rigged Hilbert spaces associated with this construction is
  also studied from the viewpoint of de Branges's theory.
\end{minipage}
\end{center}

\bigskip
\thispagestyle{empty}

\end{titlepage}

\section{Introduction}
\label{sec:intro}
In de Branges's theory of Hilbert spaces of entire functions, the
operator of multiplication by the independent variable plays a central
role.  From particular features of this operator, one can infer
properties of concrete realizations of de Branges spaces.  Conversely,
the particularities of a de Branges space determine the spectral properties of the
selfadjoint extensions of the corresponding multiplication
operator. This fact is specially useful when, via the so-called
functional model, one can identify Krein's entire operators and, more
generally, $n$-entire operators (in particular, regular and singular
Schr\"odinger operators as well as Jacobi operators) with the
multiplication operator in certain de Branges spaces
\cite{MR0012177,MR0011170,MR0011533,remling-1,IIII,MR3002855,us-6}.

This article uses singular perturbation theory for dealing with the
family of canonical selfadjoint extensions $S_\gamma$ of the
multiplication operator $S$ in a de Branges space $\cB$.  Our approach
to this issue is not reduced to the application of perturbation theory
to a concrete family of operators. Instead, we focus our attention to
functions in de Branges space theory that play a central role when
considering the operators $S_\gamma$ as a family of singular rank-one
perturbations of a certain selfadjoint extension of $S$.  To the best of our
knowledge, this way of dealing with the matter is new. We restrict our
considerations to the case when the operator $S$ is densely defined
since the other case has already been treated in \cite{III}. As a
matter of fact, this work can be regarded as a further development of
the results given in that paper. Singular perturbations are treated by
means of triplets of Hilbert spaces \cite[Chapter I]{berezanski}. We
combine this operator-theoretic approach with the properties of
functions in both the de Branges space and its set of associated
functions $\assoc\cB$ (see Section~\ref{sec:preliminaries}). In doing
so, we have two goals in mind. The first one consists in shedding
light on the properties of the linear spaces involved in the theory of
triplets of Hilbert spaces.  The second aim concerns the incorporation
of de Branges's theory into the theoretical framework of singular
rank-one perturbations. The results concerning the first goal can be
summarized as follows: Let $\cB_{+2}^{(\gamma)}$ and $\cF_{+1}$ be
$\dom(S_\gamma)$ and $\dom(S^*)$ with their respective graph
norms. Let $\cB_{-2}^{(\gamma)}$ and $\cF_{-1}$ be their respective
duals. By standard theory, one has
\begin{equation*}
\cB_{+2}^{(\gamma)}\subset\cF_{+1}
                   \subset\cB\subset\cF_{-1}\subset\cB_{-2}^{(\gamma)}\,.
\end{equation*}
We prove that $\cB_{+2}^{(\gamma)}$ is a de Branges space
(Theorem~\ref{thm:B+2-is-dB}) and the space $\cF_{+1}$ share many of
the properties of a de Branges space (Theorem~\ref{prop:F+1-is-dB}) 
but not all of them (Example~\ref{counter-example}).
We next prove that $\cF_{-1}$ is realized by (that is, is
isometrically isomorphic to) a de Branges space which is setwise equal
to $\assoc\cB$ (Proposition~\ref{prop:map-f-assoc} and
Theorem~\ref{thm:assoc-with-norm-is-dB}); we remark that $\cF_{-1}$ is
initially a Hilbert space of continuous linear functionals acting on
(the Hilbert space of entire functions) $\cF_{+1}$. Finally, we show
that $\cB_{-2}^{(\gamma)}$ is not a de Branges space but rather a
quotient space involving $\assoc\cB$
(Theorem~\ref{thm:b-minus-2-is-not-dB}).

On the subject of the second goal, our approach allows us to find
formulae for rendering the family of selfadjoint extensions of the
multiplication operator as a family of rank-one singular
perturbations. Namely (Theorems~\ref{teo:dom-selfadj-alternative} and
\ref{thm:action-s-gamma}),
\begin{equation*}
\dom(S_\gamma) =
\left\{\begin{gathered}
		g(z) = h(z) + b S_{\pi/2}(S_{\pi/2}+iI)^{-1}k(z,-i),
		\\
		h(z)\in\dom(S_{\pi/2}), b\in\C:\dual{s_0}{h}_2 =
		\pi b\left(\tan\gamma+\re\frac{s_{0}(i)}{s_{\pi/2}(i)}\right)
       \end{gathered}\right\},
\end{equation*}
where $k(z,w)$ is the reproducing kernel in $\cB$ and the functions
$s_\gamma(z)$ are given in (\ref{eq:functions-s}). Then the operator
$S_\gamma$ is the restriction of
\begin{equation*}
\tilde{S}_\gamma := \tilde{S}_{\pi/2}
				- \frac{\cot\gamma}{\pi}\dual{s_0}{\cdot}_{\cF}s_0(z)
\end{equation*}
to $\dom(S_\gamma)$; we note that $\tilde{S}_\gamma$ are maps from
$\dom(S^*)$ to $\cB_{-2}^{(\pi/2)}$ (the details are discussed in
Section~4).  Although, in an abstract setting, this kind of formulae
are known, here they are derived using function theoretical methods
pertaining to de Branges theory that make clear the prominent role
played by the functions $s_\gamma(z)$ in these formulae. Furthermore,
we obtain as by-products the Krein's formula for resolvents and some
objects related to it in terms of functions in de Branges's theory.

\section{Remarks on de Branges Hilbert spaces}
\label{sec:preliminaries}

Throughout this paper, inner products in Hilbert spaces are assumed
conjugate linear with respect to the left argument. We follow the
customary rule of denoting a function $f$ by its evaluation at an
arbitrary value of its argument $f(z)$. Also, we often denote the
action of an operator $B$ on a function $f(z)$ by $Bf(z)$ instead of
$(Bf)(z)$.

\medskip

A Hilbert space of entire functions $\cB$ is a de Branges (dB) space if it has
a reproducing kernel and is isometrically invariant under both the
conjugation $f(z)\mapsto f^\#(z):=\cc{f(\cc{z})}$  and the mapping
\begin{equation}
\label{eq:quotient-mapping}
f(z)\mapsto \frac{z-\cc{w}}{z-w}f(z)
\end{equation}
whenever $w\in\C$ is a non real zero of $f(z)$.

Alternatively, dB spaces can be defined in terms of functions of the
Hermite-Biehler class $\cHB$, that is, entire functions for which the
inequality $\abs{e(z)}>\abs{e(\cc{z})}$ holds for all $z\in\C_+$.
Indeed, given $e(z)\in\cHB$, one defines
\begin{equation*}
\cB(e) := \left\{f(z)\text{ entire}: \frac{f(z)}{e(z)} ,
\frac{f^\#(z)}{e(z)}\in\cH^2(\C_+)\right\},
\end{equation*}
where $\cH^2(\C_+)$ is the Hardy space on the upper half plane; the
inner product in $\cB(e)$ is given by
\begin{equation*}
\inner{f}{g} :=
\int_\R\frac{\cc{f(x)}g(x)}{\abs{e(x)}^2}dx.
\end{equation*}
According to \cite[Problem 50]{debranges68} $\cB(e)$ so defined is a dB
space with reproducing kernel
\begin{equation*}
k(z,w) = \begin{dcases}
			\frac{e^\#(z)e(\cc{w})-e(z)e^\#(\cc{w})}{2\pi i(z-\cc{w})},
			& w\ne\cc{z},
			\\
			\frac{{e^\#}'(z)e(z)-e'(z)e^\#(z)}{2\pi i},
			& w=\cc{z}.
		 \end{dcases}
\end{equation*}
On the other hand,
given a dB space $\cB\ne\{0\}$ there exists $e(z)\in\cHB$ such that
$\cB=\cB(e)$ isometrically \cite[Theorem 23]{debranges68}; such a function
however is not unique \cite[Theorem 1]{debranges60}.

A entire function $h(z)$ is associated to a given dB space $\cB$ if
\begin{equation*}
\frac{h(w)f(z)-h(z)f(w)}{z-w}\in\cB
\end{equation*}
for every $f(z)\in\cB$ and $w\in\C$ such that $h(w)\ne 0$.
The set of associated functions
is denoted $\assoc\cB$. By \cite[Lemma 4.5]{kaltenback} one has
$\assoc\cB = z\cB + \cB$.
Within $\assoc\cB$ lies the family of functions
\begin{equation}
\label{eq:functions-s}
s_\gamma(z) := \frac{i}{2}\left[e^{i\gamma}e(z) -
e^{-i\gamma}e^\#(z)\right],\quad\gamma\in[0,\pi).
\end{equation}
These functions are in bijective correspondence with the family of canonical
selfadjoint extension of the multiplication operator; see below. We note
that, in general, $s_\gamma(z)\in\assoc\cB\setminus\cB$ with the possible
exception of at most one of such functions \cite[Lemma 7]{debranges59}.
Extending the definition \eqref{eq:functions-s} to $\gamma\in\R$ one has the
identity
\begin{equation}
s_\gamma(z) = \cos(\gamma-\gamma_0) s_{\gamma_0}(z)
		     + \sin(\gamma-\gamma_0) s_{\gamma_0+\pi/2}(z),
\label{eq:s-beta}
\end{equation}
where $\gamma_0$ is fixed but otherwise arbitrary. Also,
\begin{equation}
k(z,w) = \begin{dcases}
         \frac{s_{\gamma_0+\pi/2}(z)s_{\gamma_0}(\cc{w})
         	- s_{\gamma_0+\pi/2}(\cc{w})s_{\gamma_0}(z)}{\pi(z-\cc{w})},
         & z\ne\cc{w},
         \\[1mm]
         \frac{s_{\gamma_0+\pi/2}'(z)s_{\gamma_0}(z)
         	- s_{\gamma_0+\pi/2}(z)s_{\gamma_0}'(z)}{\pi},
         & z=\cc{w}.
         \end{dcases}
\label{eq:k-given-by-s}
\end{equation}

The operator of multiplication by the independent variable is defined
by $(Sf)(z)=zf(z)$ with domain $\dom(S)$ maximal in $\cB$. This
operator is closed, completely nonselfadjoint and has deficiency
indices $(1,1)$. Additionally, $S$ is also regular, i.\,e. its
spectral core is empty, if and only if
$\cB=\cB(e)$ with $e(z)\in\cHB$ devoid of zeros in the real line.
Also, $\dom(S)$ may be either dense in $\cB$ or has codimension equal
to one; the latter happens if and only if there exists (a necessarily
unique) $\gamma\in[0,\pi)$ such that $s_\gamma(z)\in\cB$, in which
case $s_\gamma(z)$ is orthogonal to $\dom(S)$ \cite{debranges68}.

\medskip

From this point on, we consider only dB spaces with the property of
$S$ being densely defined; dB spaces with $\dom(S)$ having non zero
codimension have been discussed in \cite{III}.

\medskip

Observe that
\begin{equation*}
\inner{(S^*-\cc{w})k(\cdot,w)}{g(\cdot)}
	= \inner{k(\cdot,w)}{(S - w)g(\cdot)}
	= 0
\end{equation*}
for all $g(z)\in\dom(S)$. Therefore
\begin{equation*}
k(z,w)\in\ker(S^*-\cc{w}I),\quad w\in\C.
\end{equation*}
Thus, since we assume $S$ densely defined, $S^*$ can be described as
\begin{subequations}
\label{eq:adjoint-S}
\begin{gather}
\dom(S^*) = \left\{\begin{gathered}
				   g(z) = h(z) + a^{(+)}k(z,\cc{w}) + a^{(-)}k(z,w):
				   \\[1mm]
				   h(z)\in\dom(S),\; a^{(\pm)}\!\in\C,\; \im(w)>0
                   \end{gathered}\right\},
\label{eq:adjoint-S-dom}
\\[1mm]
S^*g(z) = zh(z) + w\,a^{(+)}k(z,\cc{w}) + \cc{w}\,a^{(-)}k(z,w).
\label{eq:adjoint-S-action}
\end{gather}
\end{subequations}

The canonical selfadjoint extensions of $S$ are the selfadjoint
restrictions of $S^*$. A standard description of them is given in
\cite{kaltenback}, where the connection to the family of functions
$s_\gamma(z)$ is made explicit:
\begin{subequations}
\label{eq:definition-selfadj}
\begin{gather}
\dom(S_\gamma) = \left\{g(z)=\frac{f(z) -
\frac{s_\gamma(z)}{s_\gamma(w)}f(w)}{z-w}:
				f(z)\in\cB,\; w\in\C\setminus\R\right\},
	\label{eq:dom-selfadj}
	\\[2mm]
S_\gamma g(z) = zg(z) + f(w)\frac{s_\gamma(z)}{s_\gamma(w)},\quad \gamma\in[0,\pi).
	\label{eq:action-selfaj}
\end{gather}
\end{subequations}

It follows from \eqref{eq:definition-selfadj} that
$\spec(S_\gamma)=\{\text{zeros of }s_\gamma(z)\}$ and the corresponding
eigenfunctions are of the form $s_\gamma(z)/(z-\lambda)$,
$\lambda\in\spec(S_\gamma)$. Note that \eqref{eq:definition-selfadj}
also implies
\begin{equation*}
(S_\gamma - wI)^{-1}f(z) = g(z),\quad w\not\in\spec(S_\gamma).
\end{equation*}
If $S$ is also regular, then $\bigcap_\gamma\spec(S_\gamma)=\emptyset$,
$\bigcup_\gamma\spec(S_\gamma)=\R$, and $\spec(S_\gamma)$ and
$\spec(S_{\gamma'})$ interlace whenever $\gamma\ne\gamma'$.

\begin{lemma}
\label{lem:quotient-diff-reprod-kernel}
Suppose $v\not\in\spec(S_\gamma)$, $w\not\in\spec(S_\gamma)$ and $v\ne w$. Then,
\begin{equation*}
\frac{k(z,\cc{w})-k(z,\cc{v})}{w-v}
	= (S_{\gamma}-vI)^{-1}k(z,\cc{w})
	  + \frac{1}{w-v}\left(\frac{s_{\gamma}(w)}{s_{\gamma}(v)}-1\right)
	    k(z,\cc{v}).
\end{equation*}
\end{lemma}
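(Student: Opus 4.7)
The plan is direct verification, combining the explicit form of the resolvent of $S_\gamma$ with the representation of the reproducing kernel in terms of $s_\gamma$ and $s_{\gamma+\pi/2}$ given in \eqref{eq:k-given-by-s}.

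First, I would apply formula \eqref{eq:definition-selfadj} with $f(\cdot)=k(\cdot,\cc{w})\in\cB$ and with the auxiliary parameter in \eqref{eq:dom-selfadj} set to $v$. Since $v\not\in\spec(S_\gamma)$, this yields the explicit resolvent action
\[
(S_\gamma - vI)^{-1}k(z,\cc{w}) = \frac{k(z,\cc{w}) - \frac{s_\gamma(z)}{s_\gamma(v)}k(v,\cc{w})}{z-v},
\]
which is the only place where the hypothesis $s_\gamma(v)\ne 0$ is used.

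Second, I would substitute this expression into the right-hand side of the identity to be proved, multiply through by $s_\gamma(v)(w-v)(z-v)$ (valid since $w\ne v$), and use the partial fraction identity $(w-v)^{-1}-(z-v)^{-1}=(z-w)/[(w-v)(z-v)]$. The terms containing $k(z,\cc{v})$ combine to leave a single $s_\gamma(w)\,k(z,\cc{v})/(w-v)$, so the assertion reduces to the polynomial identity
\[
s_\gamma(v)\,k(z,\cc{w})(z-w) + s_\gamma(z)\,k(v,\cc{w})(w-v) = s_\gamma(w)\,k(z,\cc{v})(z-v).
\]

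Third, to prove this last identity I would invoke \eqref{eq:k-given-by-s} with $\gamma_0=\gamma$, which gives $\pi(z-\cc{u})k(z,\cc{u})=s_{\gamma+\pi/2}(z)s_\gamma(u)-s_{\gamma+\pi/2}(u)s_\gamma(z)$ for each of $u=w,v$ and analogously for $k(v,\cc{w})$. Substituting the three resulting expressions and expanding, the cross-terms of the form $s_{\gamma+\pi/2}(w)s_\gamma(v)s_\gamma(z)$ on the left cancel, and what remains factors as $s_\gamma(w)\bigl[s_{\gamma+\pi/2}(z)s_\gamma(v)-s_{\gamma+\pi/2}(v)s_\gamma(z)\bigr]$, matching the right-hand side by a second application of \eqref{eq:k-given-by-s}.

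The computation is essentially bookkeeping, and I do not anticipate a genuine obstacle: the key conceptual point is merely that $k(\cdot,\cc{w})\in\cB\subset\dom((S_\gamma-vI)^{-1})$, so the closed-form resolvent of $S_\gamma$ applies directly, and the two summands it produces are precisely those appearing on the right-hand side of the lemma.
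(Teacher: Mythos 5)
Your proof is correct, but it follows a genuinely different route from the paper's. You apply the explicit resolvent formula from \eqref{eq:definition-selfadj} directly to $f(\cdot)=k(\cdot,\cc{w})$ and reduce the claim to the three-term algebraic identity
$s_\gamma(v)(z-w)k(z,\cc{w})+s_\gamma(z)(w-v)k(v,\cc{w})=s_\gamma(w)(z-v)k(z,\cc{v})$,
which you then verify by substituting the representation \eqref{eq:k-given-by-s}; this checks out (the cross-terms cancel exactly as you claim). The paper instead never touches \eqref{eq:k-given-by-s} here: it first proves the symmetry relation \eqref{eq:symmetry}, $s_{\gamma}(v)(S_{\gamma}-vI)^{-1}k(z,\cc{w})=s_{\gamma}(w)(S_{\gamma}-wI)^{-1}k(z,\cc{v})$, by pairing against an arbitrary $g(z)\in\cB$ and using the reproducing property, and then obtains the lemma by writing $k(z,\cc{w})=\frac{s_\gamma(w)}{s_\gamma(v)}(S_\gamma-vI)(S_\gamma-wI)^{-1}k(z,\cc{v})$ and expanding, using \eqref{eq:symmetry} a second time. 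Your computation is more elementary and self-contained, at the cost of not producing \eqref{eq:symmetry}, which the paper reuses immediately afterwards (for \eqref{eq:cayley-transf} and the alternative description of $S^*$). Two small remarks: in your third step the kernel identity should read $\pi(z-u)k(z,\cc{u})=s_{\gamma+\pi/2}(z)s_\gamma(u)-s_{\gamma+\pi/2}(u)s_\gamma(z)$ (your $(z-\cc{u})$ is a slip that does not affect the algebra you describe); and note that, exactly as in the paper's own argument, you are using the explicit resolvent formula at points $v,w\notin\spec(S_\gamma)$ that may be real, which is legitimate because the paper explicitly extends \eqref{eq:dom-selfadj} to all $w\not\in\spec(S_\gamma)$ right after \eqref{eq:definition-selfadj}.
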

\begin{proof}
For every $g(z)\in\cB$ one has
\begin{align*}
\inner{k(\cdot,\cc{w})}{(S_{\gamma}-\cc{v}I)^{-1}g(\cdot)}
	&= \frac{g(\cc{w})-\frac{s_{\gamma}(\cc{w})}{s_{\gamma}(\cc{v})}g(\cc{v})}
			{\cc{w}-\cc{v}}
	\\[1mm]
	&= \frac{s_{\gamma}(\cc{w})}{s_{\gamma}(\cc{v})}
	   \frac{g(\cc{v})-\frac{s_{\gamma}(\cc{v})}{s_{\gamma}(\cc{w})}g(\cc{w})}
			{\cc{v}-\cc{w}}
	\\[2mm]
	&= \frac{s_{\gamma}(\cc{w})}{s_{\gamma}(\cc{v})}
	   \inner{k(\cdot,\cc{v})}{(S_{\gamma}-\cc{w}I)^{-1}g(\cdot)}
\end{align*}
so
\begin{equation}
\label{eq:symmetry}
s_{\gamma}(v)(S_{\gamma}-vI)^{-1}k(z,\cc{w})
	= s_{\gamma}(w)(S_{\gamma}-wI)^{-1}k(z,\cc{v}).
\end{equation}
Then
\begin{align*}
k(z,\cc{w})-k(z,\cc{v})
	&= \left[\frac{s_{\gamma}(w)}{s_{\gamma}(v)}(S_{\gamma}-vI)
	         (S_{\gamma}-wI)^{-1} - I\right]k(z,\cc{v})
	\\[1mm]
	&= \left[\left(\frac{s_{\gamma}(w)}{s_{\gamma}(v)}-1\right)I
	    +(w-v)\frac{s_{\gamma}(w)}{s_{\gamma}(v)}(S_{\gamma}-wI)^{-1}\right]
	    k(z,\cc{v}),
\end{align*}
yielding the desired result after one more use of \eqref{eq:symmetry}.
\end{proof}

Another consequence of \eqref{eq:symmetry} is
\begin{equation}
\label{eq:cayley-transf}
s_\gamma(\cc{w})k(z,\cc{w})
	= s_\gamma(w)U(w) k(z,w),
\end{equation}
where
\begin{equation}
  \label{eq:cayley-transform}
  U(w):= (S_\gamma-\cc{w}I)(S_\gamma-wI)^{-1}=
  I +(w-\cc{w})(S_\gamma-wI)^{-1},\quad w\in\C\setminus\R,
\end{equation}
is the Cayley transform.
This in turn yields another characterization of the adjoint operator.

\begin{lemma}
Given $\gamma\in[0,\pi)$ and $w\in\C:\im(w)>0$, the adjoint operator $S^*$ can
be described as follows:
\begin{subequations}
\label{eq:adjoint-S-alt}
\begin{gather}
\dom(S^*) = \left\{\begin{gathered}
		     f(z) = g(z) + b (S_\gamma-(\re w)I)(S_\gamma - wI)^{-1}k(z,w):
		     \\[1mm]
		     g(z)\in\dom(S_\gamma),\; b\in\C
             \end{gathered}\right\},
\label{eq:adjoint-S-alt-dom}
\\[1mm]
S^*f(z) = S_\gamma g(z)
	      + b ((\re w)S_\gamma - \abs{w}^2I)(S_\gamma - wI)^{-1}k(z,w).
\label{eq:adjoint-S-alt-action}
\end{gather}
\end{subequations}
\end{lemma}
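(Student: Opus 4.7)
The plan is to derive the alternative description from the standard one \eqref{eq:adjoint-S} by recasting the span of the two reproducing kernels $k(z,w),k(z,\cc{w})$ as the span of
\[
\phi(z):=(S_\gamma-(\re w)I)(S_\gamma-wI)^{-1}k(z,w)
\]
together with a suitable element of $\dom(S_\gamma)$. Since $\dim\dom(S^*)/\dom(S)=2$ while $\dim\dom(S_\gamma)/\dom(S)=1$, exactly one independent direction beyond $\dom(S_\gamma)$ is required, and $\phi$ must supply it.

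Writing $w=u+iv$ with $v>0$, the elementary identity $(S_\gamma-uI)(S_\gamma-wI)^{-1}=I+iv(S_\gamma-wI)^{-1}$ yields
\[
\phi(z)=k(z,w)+iv\,g(z),\qquad g(z):=(S_\gamma-wI)^{-1}k(z,w)\in\dom(S_\gamma).
\]
Since $k(\cdot,w)\in\ker(S^*-\cc{w}I)\subset\dom(S^*)$, the element $\phi$ lies in $\dom(S^*)$. On the other hand $\phi\notin\dom(S_\gamma)$, for otherwise $k(\cdot,w)=\phi-iv\,g$ would be an eigenvector of the selfadjoint operator $S_\gamma$ with non-real eigenvalue $\cc{w}$, which is impossible since $k(\cdot,w)\ne 0$ when $\im w>0$.

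For the reverse inclusion, take $f\in\dom(S^*)$ expressed as in \eqref{eq:adjoint-S-dom}. Using \eqref{eq:cayley-transf} together with \eqref{eq:cayley-transform} applied to $k(z,w)$, one obtains
\[
k(z,\cc{w})=\frac{s_\gamma(w)}{s_\gamma(\cc{w})}\bigl[k(z,w)+2iv\,g(z)\bigr].
\]
Substituting this expression for $k(z,\cc{w})$ into the decomposition of $f$ and collecting the coefficient of $\phi(z)=k(z,w)+iv\,g(z)$, the residue becomes a linear combination of $h(z)\in\dom(S)$ and $g(z)$, which lies in $\dom(S_\gamma)$. This produces the decomposition \eqref{eq:adjoint-S-alt-dom} with an explicit $b\in\C$.

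To verify the action \eqref{eq:adjoint-S-alt-action}, compute
\[
S^*\phi(z)=\cc{w}\,k(z,w)+iv\,S_\gamma g(z)=\cc{w}\,k(z,w)+iv\bigl(w\,g(z)+k(z,w)\bigr)=u\,k(z,w)+ivw\,g(z),
\]
using $S_\gamma g=wg+k(\cdot,w)$. Separately, a direct manipulation gives the operator identity
\[
((\re w)S_\gamma-\abs{w}^2I)(S_\gamma-wI)^{-1}=uI+ivw\,(S_\gamma-wI)^{-1},
\]
which applied to $k(z,w)$ produces the same expression, confirming the formula. The only delicate step is the coefficient bookkeeping in the Cayley-transform substitution; beyond that the proof is routine.
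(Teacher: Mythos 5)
Your proof is correct, and it is organized differently from the paper's. You isolate the single vector $\phi=(S_\gamma-(\re w)I)(S_\gamma-wI)^{-1}k(\cdot,w)=k(\cdot,w)+i(\im w)(S_\gamma-wI)^{-1}k(\cdot,w)$, check that it lies in $\dom(S^*)\setminus\dom(S_\gamma)$, and conclude the domain description from the fact that $\dom(S_\gamma)$ has codimension one in $\dom(S^*)$ (a consequence of the deficiency indices being $(1,1)$; this standard fact deserves an explicit citation). The action formula then follows by applying $S^*$ directly to $\phi$, using $S^*k(\cdot,w)=\cc{w}\,k(\cdot,w)$ together with $S_\gamma(S_\gamma-wI)^{-1}k(\cdot,w)=k(\cdot,w)+w(S_\gamma-wI)^{-1}k(\cdot,w)$, and your operator identity $((\re w)S_\gamma-\abs{w}^2I)(S_\gamma-wI)^{-1}=(\re w)I+i(\im w)w(S_\gamma-wI)^{-1}$ is easily verified. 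The paper instead transforms the von Neumann decomposition \eqref{eq:adjoint-S-dom} wholesale: it normalizes the kernels to $l(z,w)=k(z,w)/s_\gamma(\cc{w})$, uses \eqref{eq:cayley-transf} to replace $l(z,\cc{w})$ by $U(w)l(z,w)$, splits into the combinations $U(w)\pm I$, and identifies $U(w)+I$ and $wU(w)+\cc{w}I$ via \eqref{eq:cayley-transform}; this constructive route yields the pair $(g,b)$ explicitly in terms of $(h,a^{(\pm)})$. Your argument is shorter and, except for your middle paragraph, does not need \eqref{eq:cayley-transf} at all; that middle paragraph, with the substitution $k(z,\cc{w})=\frac{s_\gamma(w)}{s_\gamma(\cc{w})}\bigl[k(z,w)+2i(\im w)g(z)\bigr]$, is actually redundant once the codimension-one argument is in place, but it is correct and essentially reproduces the paper's computation, so you may keep it as the constructive alternative or drop it.
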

\begin{proof}
Define $l(z,w):=k(z,w)/s_\gamma(\cc{w})$. Every $f(z)\in\dom(S^*)$
is of the form
\begin{equation*}
f(z) = h(z) + a^{(+)}l(z,\cc{w}) + a^{(-)}l(z,w),\quad h(z)\in\dom(S),\quad
	a^{(\pm)}\in\C.
\end{equation*}
With the help of \eqref{eq:cayley-transf}, this can be written as
\begin{equation*}
f(z) = g(z) + \frac{a^{(+)}+a^{(-)}}{2}\left(U(w) + I\right)l(z,w),
\end{equation*}
where
\begin{equation*}
g(z) = h(z) + \frac{a^{(+)}-a^{(-)}}{2}\left(U(w) - I\right)l(z,w)
\end{equation*}
belongs to the domain of $S_\gamma$ (see
(\ref{eq:cayley-transform})). Since,
$U(w) + I=2(S_\gamma-(\re w)I)$, which can be verified by a
straightforward computation,
\eqref{eq:adjoint-S-alt-dom} has been established.

To prove \eqref{eq:adjoint-S-alt-action}, it suffices to note that
\begin{align*}
S^*f(z) &= S^*g(z) + \frac{a^{(+)}+a^{(-)}}{2}
			\left(S^*l(z,\cc{w}) + S^*l(z,w)\right)
		\\[1mm]
		&= S_\gamma g(z) + \frac{a^{(+)}+a^{(-)}}{2}
			\left(wU(w) + \cc{w}I\right)l(z,w)
\end{align*}
and rewrite $wU(w) + \cc{w}I$ using (\ref{eq:cayley-transform}).
\end{proof}

The description \eqref{eq:adjoint-S-alt} of $S^*$ takes its simplest form
when $w=i$ (or $w=-i$). In such a case, one obtains (cf. \cite{albeverio-kurasov})
\begin{subequations}
\label{eq:adjoint-S-usual}
\begin{gather}
\dom(S^*) = \left\{\begin{gathered}
		     f(z) = g(z) + b S_\gamma(S_\gamma - iI)^{-1}k(z,i):
		     \\[1mm]
		     g(z)\in\dom(S_\gamma),\; b\in\C
             \end{gathered}\right\},
\label{eq:adjoint-S-usual-dom}
\\[1mm]
S^*f(z) = S_\gamma g(z)
	      - b (S_\gamma - iI)^{-1}k(z,i).
\label{eq:adjoint-S-usual-action}
\end{gather}
\end{subequations}

\begin{remark}
\label{rem:HB-no-real-zeros}
From this point on, de Branges spaces are assumed to be generated by
Hermite-Biehler functions free of zeros on the real line.
Equivalently, de Branges spaces are supposed to have the property
that, for every $x\in\R$, there exists $f(z)$ in the space such that
$f(x)\ne 0$. This assumption, which means that the operator of
multiplication is regular, entails no essential loss of generality.
\end{remark}


\section{Scales of dB spaces}
\label{sec:scale-db-spaces}

Most of the following discussion is based on
\cite{albeverio-kurasov97a,albeverio-kurasov97,albeverio-kurasov} with
some slight modifications. Other related references are
\cite{hassi-desnoo97,hassi-kaltenback-desnoo,hassi-desnoo98} and the
classical book by Berezanski{\u\i} \cite[Chapter I]{berezanski}.

\medskip

Given $\gamma\in[0,\pi)$, let $\cB_{+2}^{(\gamma)}:=\dom(S_\gamma)$ equipped
with the graph inner product
\begin{equation*}
\inner{f}{g}_{+2}
	:= \inner{S_\gamma f}{S_\gamma g} + \inner{f}{g}
	 = \inner{(S_\gamma - iI)f}{(S_\gamma - iI)g}.
\end{equation*}
Due to $S_\gamma$ being closed, $\cB_{+2}^{(\gamma)}$ is a Hilbert space.
Its dual $\cB_{-2}^{(\gamma)}$ is the completion of $\cB$ under
the norm induced by the inner product
\begin{equation*}
\inner{f}{g}_{-2}
	:= \inner{f}{(S_\gamma^2 + I)^{-1} g}
	 = \inner{(S_\gamma - iI)^{-1}f}{(S_\gamma - iI)^{-1}g}.
\end{equation*}
Since
\begin{equation*}
\norm{S_\gamma f}_{-2}\le \norm{f}, \quad f(z)\in\dom(S_\gamma),
\end{equation*}
the operator $S_\gamma$ has a unique contractive continuation
$\hat{S_\gamma}:\cB\to\cB_{-2}^{(\gamma)}$. Similarly,
\begin{equation*}
R_\gamma(w) :=(S_\gamma - wI)^{-1}:\cB\to\cB_{+2}^{(\gamma)},\quad
w\in\C\setminus\spec(S_\gamma),
\end{equation*}
has a unique bounded extension $\hat{R}_\gamma(w):\cB_{-2}^{(\gamma)}\to\cB$.
The latter operators satisfy the resolvent identity
\begin{equation}
\label{eq:ext-res-identity}
\hat{R}_\gamma(w) - \hat{R}_\gamma(v)
	= (w-v)R_\gamma(w)\hat{R}_\gamma(v),
\end{equation}
for $w\in\C\setminus\spec(S_\gamma)$ and $v\in\C\setminus\spec(S_\gamma)$.
In terms of $\hat{R}_\gamma(w)$, the associated pairing
$\dual{\cdot}{\cdot}_2:\cB_{-2}^{(\gamma)}\times\cB_{+2}^{(\gamma)}\to\C$
can be written as
\begin{equation}
  \label{eq-2pairing-expression}
\dual{\varphi}{g}_{2} =
	\inner{\hat{R}_\gamma(-i)\varphi}{(S_\gamma-iI)g},\quad
	\varphi\in\cB_{-2}^{(\gamma)},\quad g(z)\in\cB_{+2}^{(\gamma)}.
\end{equation}

Before going into our first result, let us note that
\begin{equation}
\label{eq:norm+2}
\norm{g}_{+2}
= \norm{(S_{\gamma}-iI)g}
 = \norm{f}\,,
\end{equation}
when $g(z)$ and $f(z)$ are related as in \eqref{eq:dom-selfadj}.

\begin{theorem}
  \label{thm:B+2-is-dB}
$\cB_{+2}^{(\gamma)}$ is a dB space.
\end{theorem}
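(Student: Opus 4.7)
My plan is to verify directly the three defining properties of a de Branges space for $\cB_{+2}^{(\gamma)}$, as recalled in Section~\ref{sec:preliminaries}: (a) existence of a reproducing kernel, (b) isometric invariance under the conjugation $f\mapsto f^\#$, and (c) isometric invariance under the quotient map \eqref{eq:quotient-mapping} at non-real zeros. Property (a) is essentially immediate: $\cB_{+2}^{(\gamma)}$ is a Hilbert space because $S_\gamma$ is closed; the embedding into $\cB$ is contractive since $\norm{g}\le\norm{g}_{+2}$; therefore continuity of point evaluation is inherited from $\cB$, yielding a reproducing kernel.

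For (b), first I would verify the identity $s_\gamma^\#=s_\gamma$ directly from \eqref{eq:functions-s} by a one-line complex-conjugation check. Writing $g=(S_\gamma-iI)^{-1}f$ with $f\in\cB$ via \eqref{eq:dom-selfadj} at $w=i$, and conjugating the closed-form expression using $s_\gamma^\#=s_\gamma$, one obtains $g^\#=(S_\gamma+iI)^{-1}f^\#$; in particular, $g^\#\in\dom(S_\gamma)$. The isometry is then immediate from the selfadjoint identity $\norm{(S_\gamma-iI)h}=\norm{(S_\gamma+iI)h}$ combined with the $\#$-isometry of $\cB$:
\[
\norm{g^\#}_{+2}=\norm{(S_\gamma+iI)g^\#}=\norm{f^\#}=\norm{f}=\norm{g}_{+2}.
\]

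Property (c) is where the real work lies, and I view it as the main obstacle. The key observation is that when $g\in\dom(S_\gamma)$ has a zero at a non-real $w_0$, the representation \eqref{eq:dom-selfadj} applied with $w=w_0$ and $f=g$ (valid since $\spec(S_\gamma)\subset\R$ implies $s_\gamma(w_0)\ne 0$) collapses to
\[
(S_\gamma-w_0I)^{-1}g(z)=\frac{g(z)}{z-w_0}.
\]
Inserting this into the Cayley transform identity \eqref{eq:cayley-transform} yields
\[
U(w_0)g(z)=g(z)+(w_0-\overline{w_0})\frac{g(z)}{z-w_0}=\frac{z-\overline{w_0}}{z-w_0}g(z),
\]
so the quotient function is precisely $U(w_0)g$. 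Since $U(w_0)$ is unitary on $\cB$ and commutes with $S_\gamma$ on $\dom(S_\gamma)$ (both being functions of the selfadjoint operator $S_\gamma$), the quotient function belongs to $\dom(S_\gamma)$ and $S_\gamma U(w_0)g=U(w_0)S_\gamma g$. Taking graph norms,
\[
\norm{U(w_0)g}_{+2}^2=\norm{U(w_0)g}^2+\norm{U(w_0)S_\gamma g}^2=\norm{g}^2+\norm{S_\gamma g}^2=\norm{g}_{+2}^2,
\]
which establishes (c) and finishes the proof. The conceptual content of the argument is thus that the quotient operation on zero-vanishing elements of $\dom(S_\gamma)$ is nothing other than the Cayley transform of $S_\gamma$ at $w_0$, which automatically respects the graph norm.
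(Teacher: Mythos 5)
Your proof is correct, and for the reproducing-kernel and conjugation parts it runs essentially parallel to the paper's (the paper also writes $g=(S_\gamma\pm iI)^{-1}f$, conjugates the explicit formula using that $s_\gamma$ is real entire, and invokes unitarity of the Cayley transform; your point-evaluation argument via the contractive embedding $\norm{g}\le\norm{g}_{+2}$ is a slight shortcut compared with the paper's estimate through $(S_\gamma+iI)^{-1}k(\cdot,z)$). The genuine difference is in the quotient-map property. The paper represents $g=(S_\gamma-\cc{v_0}I)^{-1}f$ with $f\in\cB$, translates the hypothesis $g(v_0)=0$ into the relation $f(v_0)=\frac{s_\gamma(v_0)}{s_\gamma(\cc{v_0})}f(\cc{v_0})$, identifies the quotient with $(S_\gamma-v_0I)^{-1}f$, and computes the norm by factoring the Cayley transform inside $\norm{(S_\gamma+iI)(S_\gamma-\cc{v_0}I)^{-1}(S_\gamma-\cc{v_0}I)(S_\gamma-v_0I)^{-1}f}$. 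You instead apply the resolvent formula \eqref{eq:dom-selfadj} to $g$ itself (legitimate, since $g\in\cB$ and all zeros of $s_\gamma$ are real, being $\spec(S_\gamma)$), observe that $g(w_0)=0$ collapses it to $(S_\gamma-w_0I)^{-1}g=g(z)/(z-w_0)$, and hence that the quotient is exactly $U(w_0)g$; the domain invariance and isometry then follow from $U(w_0)$ being unitary and commuting with $S_\gamma$ on its domain. This makes explicit what is only implicit in the paper — that the quotient operation on elements of $\dom(S_\gamma)$ vanishing at $w_0$ \emph{is} the Cayley transform at $w_0$ — and it avoids introducing the preimage $f$ altogether, at the modest price of invoking the functional-calculus commutation $S_\gamma U(w_0)g=U(w_0)S_\gamma g$ rather than a purely formula-level norm computation. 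Both arguments rest on the same two ingredients, the explicit resolvent formula \eqref{eq:dom-selfadj} and the unitarity of the Cayley transform, so the difference is one of packaging; yours is arguably the cleaner route to part (c).
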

\begin{proof}
	The linear functional of point evaluation is continuous in
	$\cB_{+2}^{(\gamma)}$. Indeed, given $g(z)\in\cB_{+2}^{(\gamma)}$
	there exists $f(z)\in\cB$ such that
	\begin{equation*}
	g(z) = (S_\gamma - iI)^{-1}f(z).
	\end{equation*}
	Let $k(w,z)$ be the reproducing kernel in $\cB$. Then,
	\begin{equation*}
	g(z) = \inner{k(\cdot,z)}{g(\cdot)}
	     = \inner{k(\cdot,z)}{(S_\gamma - iI)^{-1}f(\cdot)}
	     = \inner{(S_\gamma + iI)^{-1}k(\cdot,z)}{f(\cdot)}
	\end{equation*}
	so
	\begin{equation*}
	\abs{g(z)} \le \norm{(S_\gamma +iI)^{-1}}\norm{k(\cdot,z)}\norm{f}
	\end{equation*}
	which along with \eqref{eq:norm+2} imply the assertion. Moreover, it
	follows that
	\begin{equation*}
	\abs{g(z)-h(z)}^2 \le C k(z,z)\norm{g - h}_{+2}^2
	\end{equation*}
	so point evaluation is uniformly continuous over compact subsets
	of $\C$.

	Next, let us verify that the conjugation $^\#$ is an isometry in
	$\cB_{+2}^{(\gamma)}$. Consider
	\begin{equation*}
	g(z) = (S_\gamma + iI)^{-1}f(z)
	     = \frac{f(z)-\frac{s_\gamma(z)}{s_\gamma(-i)}f(-i)}{z+i}
	\end{equation*}
	with $f(z)\in\cB$. We have
	\begin{equation*}
	g^\#(z) =
	\frac{f^\#(z)-\frac{s_\gamma(z)}{\cc{s_\gamma(-i)}}\cc{f(-i)}}{z-i}
	        = \frac{f^\#(z)-\frac{s_\gamma(z)}{s_\gamma(i)}f^\#(i)}{z-i}
	        = (S_\gamma - iI)^{-1}f^\#(z),
	\end{equation*}
	thus $g^\#(z)\in\cB_{+2}^{(\gamma)}$ since $f^\#(z)\in\cB$. Moreover,
	\begin{align*}
	 \norm{g^\#}_{+2}
	   = \norm{(S_\gamma - iI)g^\#}
	  &= \norm{f^\#}
	  \\[1mm]
	  &= \norm{f}
	   = \norm{(S_\gamma + iI)g}
	   = \norm{g}_{+2},
	\end{align*}
	proving the assertion; the fact that the Cayley transform is unitary has
	been used along the way.

	Now, suppose that $v_0$ is a non-real zero of
	$g(z)\in\cB_{+2}^{(\gamma)}$ and consider $f(z)\in\cB$ such that
	\begin{equation*}
	g(z) = (S_\gamma -\cc{v_0}I)^{-1}f(z)
	     = \frac{f(z)-\frac{s_\gamma(z)}{s_\gamma(\cc{v_0})}f(\cc{v_0})}
	            {z-\cc{v_0}}\,.
	\end{equation*}
	Thus, the assumption $g(v_0)=0$ yields
	\begin{equation*}
	f(v_0) = \frac{s_\gamma(v_0)}{s_\gamma(\cc{v_0})}f(\cc{v_0}).
	\end{equation*}
	Hence
	\begin{equation*}
	\frac{z-\cc{v_0}}{z-v_0}g(z)
		= \frac{f(z)-\frac{s_\gamma(z)}{s_\gamma(\cc{v_0})}f(\cc{v_0})}{z-v_0}
		= \frac{f(z)-\frac{s_\gamma(z)}{s_\gamma(v_0)}f(v_0)}{z-v_0}
		= (S_\gamma - v_0I)^{-1}f(z)
	\end{equation*}
	so $(z-\cc{v_0})(z-v_0)^{-1}g(z)\in\cB_{+2}^{(\gamma)}$. Finally,
	\begin{align*}
	\norm{\frac{(\cdot)-\cc{v_0}}{(\cdot)-v_0}g(\cdot)}_{+2}
	&= \norm{(S_\gamma + iI)(S_\gamma - \cc{v_0}I)^{-1}
		    (S_\gamma - \cc{v_0}I)(S_\gamma -v_0I)^{-1}f}
	\\[1mm]
	&= \norm{(S_\gamma + iI)(S_\gamma -\cc{v_0}I)^{-1}f}
	 = \norm{g}_{+2},
	\end{align*}
	where again the isometric character of the Cayley transform has been used.
	The proof is now complete.
\end{proof}

\begin{remark}
Denote by $k_{+2}(z,w)$ the reproducing kernel in $\cB_{+2}^{(\gamma)}$. Since
\begin{align*}
g(w) &= \inner{k(\cdot,w)}{g(\cdot)}
	    \\[1mm]
	 &= \inner{(S_\gamma + iI)(S_\gamma - iI)^{-1}k(\cdot,w)}
	          {(S_\gamma + iI)(S_\gamma - iI)^{-1}g(\cdot)}
	    \\[1mm]
	 &= \inner{(S_\gamma + iI)(S_\gamma^2 + I)^{-1}k(\cdot,w)}
	          {(S_\gamma + iI)g(\cdot)}
	 \\[1mm]
	 &= \inner{(S_\gamma^2 + I)^{-1}k(\cdot,w)}{g(\cdot)}_{+2}
\end{align*}
for all $g(z)\in\cB_{+2}^{(\gamma)}$ and $w\in\C$, it follows that
\begin{equation*}
k_{+2}(z,w) = (S_\gamma^2 + I)^{-1}k(z,w).
\end{equation*}
\end{remark}

\medskip

Let us introduce a second scale of spaces associated with $\cB$ and $S$.
Define $\cF_{+1}:= \dom(S^*)$ equipped with the graph inner product
\begin{equation*}
\inner{f}{g}_{+\cF}
	:= \inner{S^* f}{S^* g} + \inner{f}{g}.
\end{equation*}
Due to \eqref{eq:adjoint-S-usual-dom}, for $f(z)$ and $g(z)$ in
$\dom(S^*)$, one has
\begin{gather*}
f(z) = h(z) + b S_\gamma(S_\gamma-iI)^{-1}k(z,i),
	\quad h(z)\in\dom(S_\gamma),\quad b\in\C,
	\\[1mm]
g(z) = n(z) + d S_\gamma(S_\gamma-iI)^{-1}k(z,i),
	\quad n(z)\in\dom(S_\gamma),\quad d\in\C.
\end{gather*}
From these expansions, using \eqref{eq:adjoint-S-usual-action}, one obtains
\begin{equation}
\label{eq:inner-product-F+}
\inner{f}{g}_{+\cF}
	= \inner{h}{n}_{+2}
        + \cc{b}d\norm{(S_\gamma-i)^{-1}k(\cdot,i)}_{+2}^2=
        \inner{h}{n}_{+2}
        + \cc{b}d\norm{k(\cdot,i)}^2.
\end{equation}
This clearly implies $\norm{f}_{+\cF}=\norm{f}_{+2}$ whenever
$f\in\dom(S_\gamma)$, which gives rise to the scale
\begin{equation*}
\cB_{+2}^{(\gamma)}\subset\cF_{+1}
                   \subset\cB\subset\cF_{-1}\subset\cB_{-2}^{(\gamma)};
\end{equation*}
here $\cF_{-1}$ is the completion of $\cB$ with respect
to the norm
\begin{equation*}
\norm{f}_{-\cF}
:= \sup_{g(z)\in\dom(S^*)\setminus\{0\}}
	\frac{\abs{\inner{f}{g}}}{\norm{g}_{+\cF}}.
\end{equation*}

\begin{remark}
$\cB_{+2}^{(\gamma)}$ is not dense in $\cF_{+1}$, a fact
obvious upon inspection of \eqref{eq:inner-product-F+}.
\end{remark}

For the next result we will use the following consequence of
Lemma~\ref{lem:quotient-diff-reprod-kernel},
\begin{align}
U(\cc{w_0})k(v,z)
	&= (S^*-w_0I)(S_\gamma-\cc{w_0}I)^{-1}k(v,z)\nonumber
	\\[1mm]
	&= \frac{\cc{z}-w_0}{\cc{z}-\cc{w_0}}k(v,z)
	   - \frac{\cc{w_0}-w_0}{\cc{z}-\cc{w_0}}
          \frac{s_\gamma(\cc{z})}{s_\gamma(\cc{w_0})}k(v,w_0),
          \quad z\ne w_0,
	   \label{eq:cayley-transform-on-k}
\end{align}
where $U(w)$ is the Cayley transform \eqref{eq:cayley-transform}.

\begin{theorem}
\label{prop:F+1-is-dB}
$\cF_{+1}$ is a Hilbert space of entire functions with reproducing
kernel, isometrically invariant under the conjugation
$f(z)\mapsto f^\#(z):=\cc{f(\cc{z})}$, and invariant under the mapping
$f(z)\mapsto \frac{z-\cc{w}}{z-w}f(z)$ whenever $w\in\C$ is a non real
zero of $f(z)$.
\end{theorem}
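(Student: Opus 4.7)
The plan is to split the argument into four parts, matching the four assertions of the theorem.

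\textit{Hilbert space with reproducing kernel.} $\cF_{+1}$ is a Hilbert space of entire functions since $S^*$ is closed (being the adjoint of a densely defined operator) and $\dom(S^*)\subset\cB$. For continuity of point evaluation, I decompose $f=h+b\phi$ with $h\in\cB_{+2}^{(\gamma)}$, $\phi:=S_\gamma(S_\gamma-iI)^{-1}k(\cdot,i)$, and $b\in\C$. By \eqref{eq:inner-product-F+} this decomposition is orthogonal in $\cF_{+1}$, so $\norm{h}_{+2}\le\norm{f}_{+\cF}$ and $\abs{b}\norm{k(\cdot,i)}\le\norm{f}_{+\cF}$. Using Theorem~\ref{thm:B+2-is-dB} to estimate $\abs{h(z)}$ and the fixed pointwise value of $\phi(z)$, I obtain $\abs{f(z)}\le C_z\norm{f}_{+\cF}$ locally uniformly in $z$, which also produces the reproducing kernel.

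\textit{Isometric $^\#$-invariance.} The conjugation is anti-unitary on $\cB$ and commutes with $S$ pointwise: $(Sf)^\#(z)=\cc{\cc{z}f(\cc{z})}=zf^\#(z)=(Sf^\#)(z)$, so $^\#$ preserves $\dom(S)$. By the general duality $(JAJ)^{*}=JA^{*}J$ for anti-unitary $J$, the same holds for $S^*$: conjugation preserves $\dom(S^*)$ and commutes with $S^*$. Hence $\norm{f^\#}_{+\cF}^{2}=\norm{(S^{*}f)^\#}^{2}+\norm{f^\#}^{2}=\norm{f}_{+\cF}^{2}$.

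\textit{Invariance under the quotient map (the main obstacle).} Given $f\in\cF_{+1}$ with a non-real zero $w$, the entire function $\psi(z):=\frac{z-\cc{w}}{z-w}f(z)$ lies in $\cB$ because $\cB$ is a dB space. To show $\psi\in\dom(S^*)$, I would verify that $p\mapsto\inner{\psi}{Sp}$ is continuous on $\dom(S)$ in the $\cB$-norm. Since $\abs{(x-\cc{w})/(x-w)}=1$ on $\R$, polynomial division yields
\begin{equation*}
\frac{x-w}{x-\cc{w}}\cdot x \;=\; x+(\cc{w}-w)+\frac{\cc{w}(\cc{w}-w)}{x-\cc{w}},
\end{equation*}
and substituting this into the integral expression of $\inner{\psi}{Sp}$ splits it into three pieces: $\inner{S^{*}f}{p}$, $(\cc{w}-w)\inner{f}{p}$, and $\cc{w}(\cc{w}-w)\inner{f/(z-w)}{p}$. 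The last inner product makes sense because the zero condition $f(w)=0$ together with \eqref{eq:dom-selfadj} give $f/(z-w)=(S_\gamma-wI)^{-1}f\in\cB$. Each term is bounded by a constant multiple of $\norm{p}$, so $\psi\in\dom(S^*)$, and the explicit form of $S^*\psi$ reads
\begin{equation*}
S^*\psi \;=\; S^{*}f+(w-\cc{w})\bigl[f+w(S_\gamma-wI)^{-1}f\bigr].
\end{equation*}

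The genuine subtlety in the last step is that $S^*$ does not act as multiplication by $z$ on all of $\dom(S^*)$ (the eigenfunctions $k(\cdot,\cc{w}),k(\cdot,w)$ obstruct any naive pointwise computation), so the membership $\psi\in\dom(S^*)$ cannot be read off an algebraic identity and must be extracted from the integral representation of the $\cB$-inner product, where the unimodularity of the Blaschke factor on $\R$ is what makes the bound possible.
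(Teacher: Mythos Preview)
Your proof is correct, but for two of the three parts it follows a different path from the paper's.

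For the reproducing kernel, the paper simply observes that $k(\cdot,w)\in\cB\subset\cF_{-1}$ and writes
\[
|g(w)-f(w)|=\bigl|\inner{k(\cdot,w)}{g-f}\bigr|\le\norm{k(\cdot,w)}_{-\cF}\norm{g-f}_{+\cF},
\]
which avoids any appeal to Theorem~\ref{thm:B+2-is-dB}. Your orthogonal-decomposition argument is heavier but perfectly valid.

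The treatment of $^\#$ is essentially the same in both proofs.

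For the quotient map the paper proceeds operator-theoretically: it identifies $\frac{z-\cc{w}}{z-w}f(z)$ with $U(w)f$, where $U(w)=(S_\gamma-\cc{w}I)(S_\gamma-wI)^{-1}$ is the Cayley transform, and notes that $U(w)$ maps $\dom(S^*)$ into itself (because $U(w)=I+(w-\cc{w})(S_\gamma-wI)^{-1}$ and the resolvent lands in $\dom(S_\gamma)$); the pointwise identification then comes from the precomputed formula \eqref{eq:cayley-transform-on-k}. Your integral argument via polynomial division is more elementary and self-contained, and it yields the explicit expression for $S^*\psi$ as a by-product. Conversely, the paper's route makes it transparent that $f\mapsto\psi$ is a $\norm{\cdot}$-unitary on $\cB$, something your computation does not display directly. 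Both arguments are short; the structural difference is that you verify the adjoint-domain condition by hand from the integral form of the inner product, whereas the paper recognizes the map as a known unitary and inherits the membership in $\dom(S^*)$ for free.
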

\begin{proof}
Given $f(z), g(z)\in\dom(S^*)$ and $w\in\C$,
\begin{equation*}
\abs{g(w)-f(w)}
	= \abs{\inner{k(\cdot,w)}{g(\cdot)-f(\cdot)}}
	\le \norm{k(\cdot,w)}_{-\cF}\norm{g-f}_{+\cF},
\end{equation*}
which implies that point evaluation is continuous in $\cF_{+1}$.

Next, suppose $f(z)\in\dom(S^*)$. Then, according to \eqref{eq:adjoint-S-dom},
\begin{equation*}
  f(z) = h(z)+a^{(+)}k(z,\cc{w})+ a^{(-)}k(z,w),
\end{equation*}
for some $h(z)\in\dom(S)$, $a,b\in\C$ and some fixed $w\in\C_+$. Thus
\begin{equation*}
f^\#(z) = h^\#(z)+\cc{a^{(+)}k(\cc{z},\cc{w})}+\cc{a^{(-)}k(\cc{z},w)}.
\end{equation*}
From this equality, one concludes that $f^\#(z)\in\dom(S^*)$ since $\cc{k(\cc{z},w)}=k(z,\cc{w})$ and $h^\#(z)\in\dom(S)$. Moreover, recalling that $S^*$ commutes
with the conjugation $^\#$, one obtains
\begin{equation*}
\norms{g^\#}_{+\cF}^2
	= \norm{S^*g^\#}^2 + \norms{g^\#}^2
	= \norm{(S^*g)^\#}^2 + \norms{g}^2
	= \norms{g}_{+\cF}^2.
\end{equation*}

Finally, suppose that $f(z)\in\dom(S^*)$ has a non real zero $w_0$. Define
$g(z):=U(w_0)f(z)$, where $U(w)$ is given in \eqref{eq:cayley-transform}. Since $U(w)$
maps $\dom(S^*)$ into itself, the function $g(z)$ belongs to $\dom(S^*)$. Moreover,
\begin{equation*}
\inner{k(\cdot,z)}{g(\cdot)}
	= \inner{U(\cc{w_0})k(\cdot,z)}{f(\cdot)}
	= \frac{z-\cc{w_0}}{z-w_0}f(z),
\end{equation*}
as implied by \eqref{eq:cayley-transform-on-k}.
\end{proof}

In general, $\cF_{+1}$ is not a dB space. This is somewhat hinted in the 
proof above: If $f(z)\in\dom(S^*)$ has a non real zero $w_0$, then
$(z-\cc{w_0})(z-w_0)^{-1}f(z)\in\dom(S^*)$ but 
$\norm{(\cdot-\cc{w_0})(\cdot-w_0)^{-1}f}_{+\cF}$ does not necessarily equal 
$\norm{f}_{+\cF}$. The next (counter) example illustrates this fact.

\begin{example}
\label{counter-example}
Consider the Paley-Wiener space
\begin{equation*}
\cPW_a := \left\{f(z)=\int_{-a}^a e^{izx}\varphi(x)dx : \varphi\in L_2(-a,a)\right\},
\end{equation*}
whose norm clearly obeys $\norm{f}_{\cPW_a} = \norm{\varphi}_{L_2(-a,a)}$.
Moreover, the Fourier transform is a unitary mapping between the maximally 
defined operator $\varphi\mapsto i\varphi'$ in $L_2(-a,a)$ and $S^*$, thus
\[
\dom(S^*) 
= \left\{f(z)=\int_{-a}^a e^{izx}\varphi(x)dx : \varphi\in \text{AC}[-a,a]\right\}.
\]
It follows that
\[
\norm{f}_{+\cF}^2 = \norm{\varphi'}_{L_2(-a,a)}^2 + \norm{\varphi}_{L_2(-a,a)}^2.
\]
Its selfadjoint restrictions $S_\gamma$, $\gamma\in [0,\pi)$, have domains
\[
\dom(S_\gamma) 
= \left\{f(z)=\int_{-a}^a e^{izx}\varphi(x)dx : \varphi\in \text{AC}[-a,a]
	\text{ and } \varphi(a)=e^{2i\gamma}\varphi(-a)\right\}.
\]
Suppose $f(z) = \dom(S^*)$ has a non real zero $w_0$. If $f(z)$ is the Fourier
transform of $\varphi\in L_2(-a,a)$, one has
\[
\frac{z-\cc{w_0}}{z- w_0}f(z)
	= \int_{-a}^a e^{izx} \eta(x) dx,
\]
where (choosing $\gamma=0$)
\[
\eta(x) 
	= (A_0 - \cc{w_0}I)(A_0 - w_0 I)^{-1}\varphi(x)
	= \varphi(x) - 2i\re(w_0) \int_{-a}^x e^{iw_0 (y-x)}\varphi(y)dy.
\]
Now set $\varphi(x)=e^{-x}$, whose image is
\[
f(z) = \begin{dcases}
		\frac{2\sin(z+i)a}{z+i},& z\ne -i,
		\\
		2a                     ,& z=   -i,	
	   \end{dcases}
\]
and choose $w_0 = \pi/a - i$ so
\[
\eta(x) = e^{-x} + i\frac{2a}{\pi}e^{-x}\left(1+ e^{-i\pi x/a}\right).
\]
One can readily verify that $\norm{\varphi'}_{L_2(-a,a)}\ne\norm{\eta'}_{L_2(-a,a)}$.
\end{example}

Denote by $\dual{\cdot}{\cdot}_\cF$ the pairing between $\cF_{-1}$ and
$\cF_{+1}$. The expression $\dual{f}{\phi}_\cF$ for $f\in\cF_+$ and
$\phi\in\cF_{-1}$ of course means $\cc{\dual{\phi}{f}}_\cF$.
\begin{proposition}
\label{prop:map-f-assoc}
The mapping $\dual{k(\cdot, z)}{\cdot}_\cF$ is a bijection between
$\cF_{-1}$ and $\assoc\cB$.
\end{proposition}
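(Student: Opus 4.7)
The strategy is to exhibit an explicit representation of every $\phi\in\cF_{-1}$ as a pair of elements of $\cB$ via the graph of $S^*$, and then compute the image under $\dual{k(\cdot,z)}{\cdot}_\cF$ in closed form. Since $S^*$ is closed, the graph map $\iota:\cF_{+1}\to\cB\oplus\cB$, $f\mapsto(f,S^*f)$, is an isometric embedding when $\cB\oplus\cB$ is endowed with the direct sum norm. Hahn--Banach extension to $\cB\oplus\cB$ combined with Riesz representation of $(\cB\oplus\cB)^*$ then yields, for each $\phi\in\cF_{-1}$, a (not necessarily unique) pair $g_1(z),g_2(z)\in\cB$ such that
\[
\dual{\phi}{f}_\cF = \inner{g_1}{f} + \inner{g_2}{S^*f},\qquad f(z)\in\dom(S^*).
\]

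Evaluating on $k(\cdot,z)$, and using the identity $S^*k(\cdot,z)=\cc{z}k(\cdot,z)$ (which follows from \eqref{eq:adjoint-S}), the reproducing property, and the conjugation convention for the pairing, a short computation gives
\[
\dual{k(\cdot,z)}{\phi}_\cF = g_1(z) + zg_2(z).
\]
This expression lies in $\cB + z\cB$, which coincides with $\assoc\cB$ by \cite[Lemma 4.5]{kaltenback}. Hence the assignment $\phi\mapsto\dual{k(\cdot,z)}{\phi}_\cF$ is well-defined and linear into $\assoc\cB$.

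Surjectivity is read off the same formula: given $h(z)\in\assoc\cB$, choose a decomposition $h=g_1+zg_2$ with $g_1,g_2\in\cB$ and define $\phi$ by the displayed expression; Cauchy--Schwarz together with the definition of $\norm{\cdot}_{+\cF}$ shows this is bounded on $\cF_{+1}$, hence an element of $\cF_{-1}$. For injectivity, assume $g_1(z)+zg_2(z)\equiv 0$ for some representing pair. Then $zg_2\in\cB$, so that $g_2(z)\in\dom(S)$ and $g_1=-Sg_2$; substituting into the displayed formula and using the defining adjoint identity yields $\dual{\phi}{f}_\cF=0$ for all $f\in\dom(S^*)$. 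The main delicacy, and the step requiring care, is the non-uniqueness of the pair $(g_1,g_2)$: two pairs representing the same $\phi$ necessarily produce the same $h$ (as verified by testing against the $k(\cdot,z)$), and two pairs producing the same $h$ differ by a vector of the form $(-Sk,k)$ with $k\in\dom(S)$, whose associated functional vanishes. This consistency closes both directions of the argument.
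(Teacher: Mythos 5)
Your proof is correct, but it takes a genuinely different route from the paper's. You represent each $\phi\in\cF_{-1}$ through the graph embedding $f\mapsto(f,S^*f)$ of $\cF_{+1}$ into $\cB\oplus\cB$, obtaining $\dual{\phi}{f}_\cF=\inner{g_1}{f}+\inner{g_2}{S^*f}$ and hence the closed form $\dual{k(\cdot,z)}{\phi}_\cF=g_1(z)+zg_2(z)$; membership in $\assoc\cB$, surjectivity and injectivity then all reduce to the identity $\assoc\cB=\cB+z\cB$ and the adjoint relation $\inner{Sk}{f}=\inner{k}{S^*f}$, and you correctly dispose of the non-uniqueness of the pair $(g_1,g_2)$ by noting that two decompositions of the same function differ by $(-Sk,k)$ with $k\in\dom(S)$, which represents the zero functional. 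The paper argues instead function-theoretically: the ``into'' part is proved by verifying the defining property of an associated function with the help of Lemma~\ref{lem:quotient-diff-reprod-kernel}, and surjectivity by writing $f=zg+h$, approximating $g$ by $g_n\in\dom(S)$, showing that $f_n=zg_n+h$ is Cauchy in $\cF_{-1}$, and identifying the image of the limit with $f$ through locally uniform convergence estimates in $\cB(e_1)$, $e_1(z)=(z+i)e(z)$. Your route is shorter and makes injectivity fully explicit (the paper is terse on that point); its one external ingredient is the identification of $\cF_{-1}$, defined as a completion of $\cB$, with the whole dual of $\cF_{+1}$, which you need in the surjectivity step to conclude that the bounded functional built from a decomposition $h=g_1+zg_2$ actually lies in $\cF_{-1}$. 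This is standard rigged-Hilbert-space theory and is also how the paper itself treats $\cF_{-1}$, so it is not a gap, but it deserves an explicit reference (e.g.\ \cite[Chapter I]{berezanski} or \cite{albeverio-kurasov}). What the paper's approach buys in exchange is that it stays entirely within de Branges function theory, exhibiting an explicit approximating sequence in $\cB$ for each associated function rather than invoking the duality identification.
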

\begin{proof}
Consider $\psi\in\cF_{-1}$ and define
\begin{equation}
\label{eq:F-minus-one-as-assoc}
f(z):=\dual{k(\cdot,z)}{\psi}_\cF.
\end{equation}
Given $w\in\C$, choose $\gamma$ such that $w\not\in\spec(S_\gamma)$. For
any $g(z)\in\cB$,
\begin{align*}
\frac{f(z)g(w)-f(w)g(z)}{z-w}
	&= \frac{1}{z-w}\dual{k(\cdot,z)}{g(w)\psi-f(w)g}_\cF
	\\[1mm]
	&= \dual{\frac{k(\cdot,z)-k(\cdot,w)}{\cc{z}-\cc{w}}}{g(w)\psi-f(w)g}_\cF
	\\[1mm]
	&= \dual{R_\gamma(w)k(\cdot,z)}{g(w)\psi-f(w)g}_\cF
	\\[4mm]
	&= \inner{k(\cdot,z)}{n(\cdot,w)},
\end{align*}
where
$n(z,w):=g(w)\tilde{R}_\gamma(w)\psi(z)-f(w)R_\gamma(w)g(z)$;
Lemma~\ref{lem:quotient-diff-reprod-kernel} has been used in the computation
above. Since $n(z,w)\in\cB$ (as a function of $z$), it follows that
$f(z)\in\assoc\cB$. Therefore, $\dual{k(\cdot,z)}{\cdot}_\cF$ maps
$\cF_{-1}$ into $\assoc\cB$.

Suppose now $f(z)\in\assoc\cB$, that is, $f(z)=zg(z)+h(z)$ for some
$g(z),h(z)\in\cB$.
Consider $\{g_n(z)\}_{n\in\N}\subset\dom(S)$ such that
$\norm{g-g_n}\to 0$ and define $f_n(z):=zg_n(z)+h(z)$. Since
\begin{equation*}
\frac{\abs{\inner{f_n-f_m}{l}}}{\norm{l}_{+\cF}}
	=  \frac{\abs{\inner{g_n-g_m}{S^*l}}}
	        {\norm{l}_{+\cF}}
	\le\norm{g_n-g_m}
\end{equation*}
for any $l(z)\in\dom(S^*)$, it follows that $\{f_n(z)\}_{n\in\N}$ is
Cauchy convergent in $\cF_{-1}$.  Let $\phi\in\cF_{-1}$ be its limit,
which is well defined (i.\,e., independent of the sequence) by
standard arguments. Since $k(z,w)\in\dom(S^*)$ for all $w\in\C$, we
can define $\tilde{f}(z):=\dual{k(\cdot,z)}{\phi}_{\cF}$.  Now, on one
hand we have
\begin{equation}
\label{eq:convergent-compact-1}
\abss{\tilde{f}(z)-f_n(z)}
	=  \abss{\dual{k(\cdot,z)}{\phi-f_n}_{\cF}}
       \le\norms{k(\cdot,z)}_{+\cF}\norms{\phi-f_n}_{-\cF}.
\end{equation}
On the other hand, we have $\assoc\cB=\cB(e_1)$ where we can choose
$e_1(z)=(z+i)e(z)$. Let $k_1(z,w)$ be the reproducing kernel in
$\cB(e_1)$ \cite{langer-woracek}.
Then,
\begin{align}
\abs{f(z)-f_n(z)}^2
	&\le \norm{k_1(\cdot,z)}_{\cB(e_1)}^2\norm{f-f_n}_{\cB(e_1)}^2
	\nonumber
	\\[1mm]
	& =  k_1(z,z)\int_\R\frac{\abs{xg(x)-xg_n(x)}^2}{(x^2+1)\abs{e(x)}^2}dx
	\nonumber
	\\[1mm]
	&\le k_1(z,z)\norm{g-g_n}^2
	\label{eq:convergent-compact-2}.
\end{align}
Therefore,
\begin{equation*}
\abss{\tilde{f}(z)-f(z)}
	\le \abss{\tilde{f}(z)-f_n(z)} + \abss{f(z)-f_n(z)}
\end{equation*}
can be made arbitrarily small in compact subsets of $\C$ due to
\eqref{eq:convergent-compact-1} and \eqref{eq:convergent-compact-2}.
This in fact shows that $\dual{k(\cdot, z)}{\cdot}_\cF:\cF_{-1}\to\assoc\cB$
is onto. Since $\dual{k(\cdot,z)}{\phi}_\cF\equiv 0$, one has $\phi=0$.
Thus this mapping is also injective.
\end{proof}

On $\assoc\cB$ ---and with some abuse of notation--- define the norm
\begin{equation}
\label{eq:norm-for-assoc}
\norm{f}_{-\cF}:=\norm{\psi}_{-\cF},
\end{equation}
with $f(z)$ and $\psi$ related as in \eqref{eq:F-minus-one-as-assoc}.
This makes
$\dual{k(\cdot, z)}{\cdot}_\cF:\cF_{-1}\to\assoc\cB$ an isometry and,
taking into account the underlying inner product, $\assoc\cB$ is a Hilbert space.

\begin{remark}
In the terminology of \cite{woracek}, $\assoc\cB$ is just a dB-normable linear
space of entire functions. It is well known that, if $\cB = \cB(e(z))$ for 
some $e(z)\in\cHB$, then $\assoc\cB = \cB((z+w)e(z))$ setwise, for any $w\in\C_+$.
The next theorem shows that \eqref{eq:norm-for-assoc} provides a different norm
under which $\assoc\cB$ becomes a dB space.
\end{remark}

\begin{theorem}
  \label{thm:assoc-with-norm-is-dB}
The Hilbert space $\assoc\cB$ given above, i.\,e. with the norm
\eqref{eq:norm-for-assoc}, is
a dB space.
\end{theorem}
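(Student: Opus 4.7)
The plan is to show $(\assoc\cB,\norm{\cdot}_{-\cF})$ is a dB space by computing its reproducing kernel and recasting it in canonical Hermite-Biehler form, from which the assertion follows by \cite[Problem 50]{debranges68}.

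Continuity of point evaluation is immediate from Proposition \ref{prop:map-f-assoc}: for $f\in\assoc\cB$ with corresponding $\psi\in\cF_{-1}$,
\[
\abs{f(z)} = \abs{\dual{k(\cdot,z)}{\psi}_\cF}
\le \norm{k(\cdot,z)}_{+\cF}\norm{f}_{-\cF},
\]
and $z\mapsto\norm{k(\cdot,z)}_{+\cF}$ is continuous, giving uniformity on compact subsets of $\C$. Hence $\assoc\cB$ admits a reproducing kernel $k_{\assoc}(z,w)$. Through the bijection of Proposition \ref{prop:map-f-assoc}, $k_{\assoc}(\cdot,w)$ corresponds to the Riesz representer $\chi_w\in\cF_{-1}$ of the bounded linear functional $\psi\mapsto\dual{k(\cdot,w)}{\psi}_\cF$, so that $k_{\assoc}(z,w)=\dual{k(\cdot,z)}{\chi_w}_\cF$.

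The remaining task is the explicit determination of this kernel. My intended calculation would exploit the orthogonal decomposition \eqref{eq:inner-product-F+} of $\cF_{+1}$ over $\cB_{+2}^{(\gamma)}$ for a convenient $\gamma$, the description \eqref{eq:adjoint-S-usual} of $\dom(S^*)$, and the identities \eqref{eq:s-beta} and \eqref{eq:k-given-by-s} connecting $k(z,w)$ with the family $\{s_\gamma(z)\}$, in order to identify $\chi_w$ concretely. The aim is to bring $k_{\assoc}(z,w)$ into the canonical form
\[
k_{\assoc}(z,w) = \frac{\tilde{e}^\#(z)\tilde{e}(\cc{w}) - \tilde{e}(z)\tilde{e}^\#(\cc{w})}{2\pi\I(z-\cc{w})}
\]
for some $\tilde{e}\in\cHB$, naturally expected to be a scalar multiple of $(z+\I)e(z)$ where $\cB=\cB(e)$---consistent with the setwise identity $\assoc\cB=\cB((z+\I)e(z))$ recorded in the remark preceding the theorem. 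Once the kernel has this form, uniqueness of the Hilbert space of entire functions with a prescribed reproducing kernel gives $(\assoc\cB,\norm{\cdot}_{-\cF})=\cB(\tilde{e})$ isometrically, and the dB property follows.

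The main obstacle is the kernel computation itself and, in particular, verifying that the extracted $\tilde{e}$ is genuinely of Hermite-Biehler class with the correct normalization to match \eqref{eq:norm-for-assoc}. I would favour this route over a direct verification of the three dB axioms: continuity of point evaluation and invariance under the conjugation $^\#$ do transfer from $\cF_{+1}$ without difficulty (the latter using Theorem \ref{prop:F+1-is-dB} together with the duality encoded in $\dual{\cdot}{\cdot}_\cF$ and the symmetry $\cc{k(\cc{z},w)}=k(z,\cc{w})$), but the quotient-mapping axiom would require an independent argument, since Example \ref{counter-example} prevents us from simply importing it from the corresponding partial statement for $\cF_{+1}$.
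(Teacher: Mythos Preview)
Your approach diverges from the paper's and, as written, has a genuine gap. The paper does not compute the reproducing kernel; it verifies the three dB axioms directly. Point evaluation and $^\#$-invariance go essentially as you sketch. For the quotient-mapping axiom the paper passes to the dual $\hat{U}(w_0):\cF_{-1}\to\cF_{-1}$ of the Cayley transform $U(\cc{w_0}):\cF_{+1}\to\cF_{+1}$ and uses \eqref{eq:cayley-transform-on-k} to show that $\dual{k(\cdot,z)}{\hat{U}(w_0)\psi}_\cF=\frac{z-\cc{w_0}}{z-w_0}f(z)$ whenever $f(w_0)=0$; the isometry of $\hat{U}$ on $\cF_{-1}$ is then deduced from surjectivity and isometry of $U$ on $\cF_{+1}$. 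So the direct route you set aside is precisely the one the paper follows; your reading of Example~\ref{counter-example} as a definitive obstruction is worth revisiting in light of this (and, conversely, you may want to scrutinise how the paper reconciles that example with its isometry claim for $U$ on $\cF_{+1}$).

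The substantive gap in your plan is the expectation that $\tilde{e}$ will be a scalar multiple of $(z+i)e(z)$. The setwise identity $\assoc\cB=\cB((z+i)e(z))$ constrains the underlying set, not the norm: distinct Hermite--Biehler functions can generate the same linear space with different (merely equivalent) dB norms, and nothing in the construction of $\norm{\cdot}_{-\cF}$ forces it to coincide with the $\cB((z+i)e(z))$ norm. The content of the theorem is that $\norm{\cdot}_{-\cF}$ is \emph{some} dB norm on $\assoc\cB$, not a prescribed one. Without a justified candidate for $\tilde{e}$, your route requires computing $k_{\assoc}(z,w)$ ab initio from the Riesz representer $\chi_w\in\cF_{-1}$ (via the orthogonal splitting \eqref{eq:inner-product-F+}) and then exhibiting an HB factorisation---a calculation you have not carried out and which is substantially harder than the axiom check. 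As it stands, the proposal is a programme rather than a proof.
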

\begin{proof}
  Consider $f(z),g(z)\in\assoc\cB$. In accordance with
  \eqref{eq:F-minus-one-as-assoc}, they are images of some
  $\psi,\eta\in\cF_{-1}$. Given $w\in\C$,
\begin{equation*}
\abs{f(w)-g(w)}
	=   \abs{\dual{k(\cdot,w)}{\psi-\eta}_\cF}
	\le \norm{k(\cdot,w)}_{+\cF}\norm{\psi-\eta}_{-\cF}.
\end{equation*}
Then,  the functional of point evaluation is continuous.

Let $^\#:\cF_{-1}\to\cF_{-1}$ be the extension of the conjugation
$f(z)\mapsto f^\#(z)=\cc{f(\cc{z})}$ defined in $\cB$, given by the rule
\begin{equation*}
\dual{\psi^\#}{f}_{\cF}:=\cc{\dual{\psi}{f^\#}_{\cF}},\quad
	\psi\in\cF_{-1},\quad f(z)\in\dom(S^*).
\end{equation*}
Since $\assoc\cB$ coincides with $\cB(e_1)$ setwise
\cite{langer-woracek}, if $f(z)\in\assoc\cB$, then
$f^\#(z)\in\assoc\cB$. Indeed, if
\begin{equation*}
f(z) = \dual{k(\cdot,z)}{\psi}_\cF\quad \text{then}\quad
f^\#(z) = \dual{k(\cdot,z)}{\psi^\#}_\cF;
\end{equation*}
this follows from the fact that $\cc{k(\cc{z},w)}=k(z,\cc{w})$. Moreover,
\begin{align*}
\norm{f^\#}_{-\cF}
     = \norm{\psi^\#}_{-\cF}
	&= \sup_{g(z)\in\cF_{+1}\setminus\{0\}}
	   \frac{\abs{\dual{\psi^\#}{g}_\cF}}{\norm{g}_{+\cF}}
	\\[1mm]
	&= \sup_{g(z)\in\cF_{+1}\setminus\{0\}}
	   \frac{\abs{\dual{\psi}{g^\#}_\cF}}{\norm{g^\#}_{+\cF}}
	 = \norm{\psi}_{-\cF} = \norms{f}_{-\cF}.
\end{align*}
Finally, let us consider the mapping \eqref{eq:quotient-mapping}. Suppose
$f(z)=\dual{k(\cdot,z)}{\psi}_\cF\in\assoc\cB$ such that $f(w_0)=0$
for some $w_0\in\C\setminus\R$. Let $\hat{U}(w):\cF_{-1}\to\cF_{-1}$ denote
the dual of the Cayley transform $U(\cc{w}):\cF_{+1}\to\cF_{+1}$. Due to
\eqref{eq:cayley-transform-on-k},
\begin{equation*}
\dual{k(\cdot,z)}{\hat{U}(w_0)\psi}_\cF
	= \dual{U(\cc{w_0})k(\cdot,z)}{\psi}_\cF
    = \frac{z-\cc{w_0}}{z-w_0}f(z).
\end{equation*}
Furthermore,
\begin{align*}
\norm{\hat{U}(\cc{w_0})\psi}_{-\cF}
	= \sup_{g(z)\in\cF_{+1}\setminus\{0\}}
	   \frac{\abs{\dual{\hat{U}(\cc{w_0})\psi}{g}_\cF}}{\norm{g}_{+\cF}}
	&= \sup_{g(z)\in\cF_{+1}\setminus\{0\}}
	   \frac{\abs{\dual{\psi}{U(w_0)g}_\cF}}
	        {\norm{U(\cc{w_0})U(w_0)g}_{+\cF}}
	\\[1mm]
	&= \sup_{g(z)\in\cF_{+1}\setminus\{0\}}
	   \frac{\abs{\dual{\psi}{g}_\cF}}
	        {\norm{U(\cc{w_0})g}_{+\cF}},
\end{align*}
where the last equation arises from $U(w):\cF_{+1}\to\cF_{+1}$ being
onto (see \eqref{eq:cayley-transform}). Since $U(w)$ is also an
isometry in there,
\begin{equation*}
\norm{\frac{(\cdot)-\cc{w_0}}{(\cdot)-w_0}f(\cdot)}_{-\cF}
	= \norm{\hat{U}(\cc{w_0})\psi}_{-\cF}
	= \norm{\psi}_{-\cF}
	= \norm{f}_{-\cF},
\end{equation*}
thereby completing the proof.
\end{proof}

In view of Proposition~\ref{prop:map-f-assoc} and
Theorem~\ref{thm:assoc-with-norm-is-dB}, we henceforth identify
$\cF_{-1}$ with its realization as the Hilbert space $\assoc\cB$
which is equipped with the norm \eqref{eq:norm-for-assoc}. Following up this
identification, we define
\begin{equation*}
\dual{f}{g}_\cF := \dual{\psi}{g}_\cF,
\quad f(z)\in\assoc\cB,
\quad g(z)\in\dom(S^*),
\end{equation*}
where $\psi\in\cF_{-1}$ is related to $f(z)$ by
\eqref{eq:F-minus-one-as-assoc}. In particular,
\begin{equation*}
\dual{k(\cdot,z)}{f}_\cF = f(z),\quad f(z)\in\cF_{-1}.
\end{equation*}
The next result provides an explicit formula for the duality pairing
between $\cF_{+1}$ and $\cF_{-1}$.

\begin{proposition}
  Suppose $\psi\in\cF_{-1}$ and $f(z)\in\assoc\cB$ be such that
  $f(z)=\dual{k(\cdot,z)}{\psi}_\cF$. Then, for every
  $g(z)\in\cF_{+1}$,
\begin{equation}
\label{eq:duality-for-K}
\dual{\psi}{g}_{\cF}
	= \int_\R \frac{\cc{f(x)}h(x)}{\abs{e(x)}^2}dx +
		a^{(+)}f^{\#}(i) + a^{(-)}f^{\#}(-i),
\end{equation}
where $g(z)=h(z) + a^{(+)}k(z,-i) + a^{(-)}k(z,i)$ is decomposed in
accordance with \eqref{eq:adjoint-S-dom}.
\end{proposition}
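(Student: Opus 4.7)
The plan is to decompose $g(z)$ according to \eqref{eq:adjoint-S-dom}, use linearity of the pairing $\dual{\cdot}{\cdot}_\cF$, and then treat the three resulting summands separately:
\[
\dual{\psi}{g}_\cF = \dual{\psi}{h}_\cF + a^{(+)} \dual{\psi}{k(\cdot,-i)}_\cF + a^{(-)} \dual{\psi}{k(\cdot,i)}_\cF.
\]
The two reproducing-kernel terms are immediate: from the convention $\dual{\psi}{f}_\cF = \cc{\dual{f}{\psi}_\cF}$ combined with the defining relation $f(z) = \dual{k(\cdot,z)}{\psi}_\cF$, one obtains $\dual{\psi}{k(\cdot,w)}_\cF = \cc{f(w)} = f^{\#}(\cc{w})$. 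Specializing to $w=-i$ and $w=i$ produces the boundary contributions $a^{(+)} f^{\#}(i) + a^{(-)} f^{\#}(-i)$ in \eqref{eq:duality-for-K}.

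The heart of the argument is establishing $\dual{\psi}{h}_\cF = \int_\R \cc{f(x)} h(x)/\abs{e(x)}^2\, dx$ for $h(z) \in \dom(S)$. I would mirror the construction used in the proof of Proposition~\ref{prop:map-f-assoc}: since $f(z) \in \assoc\cB = z\cB + \cB$, write $f(z) = z\tilde g(z) + h_0(z)$ with $\tilde g, h_0 \in \cB$, pick $g_n(z) \in \dom(S)$ with $\norm{\tilde g - g_n} \to 0$, and set $f_n(z) := z g_n(z) + h_0(z) \in \cB$. The same estimate used there shows $f_n \to \psi$ in $\cF_{-1}$ under the canonical embedding $\cB \hookrightarrow \cF_{-1}$, which identifies $\phi(z) \in \cB$ with the functional $l(z) \mapsto \inner{\phi}{l}$. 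By continuity of the pairing and this identification,
\[
\dual{\psi}{h}_\cF = \lim_{n\to\infty} \dual{f_n}{h}_\cF = \lim_{n\to\infty} \inner{f_n}{h} = \lim_{n\to\infty} \int_\R \frac{\cc{f_n(x)} h(x)}{\abs{e(x)}^2}\, dx.
\]

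The main obstacle is to justify the interchange of limit and integral, and to verify that the limiting integral converges absolutely for $f(z) \in \assoc\cB$ paired with $h(z) \in \dom(S)$. For this, note that $f, f_n \in \cB(e_1)$ with $e_1(z) := (z+i)e(z)$, and
\[
\norm{f - f_n}_{\cB(e_1)}^2 = \int_\R \frac{x^2\, \abs{\tilde g(x) - g_n(x)}^2}{(x^2+1)\abs{e(x)}^2}\, dx \le \norm{\tilde g - g_n}^2 \to 0,
\]
while $h(z) \in \dom(S)$ yields $\int_\R (x^2+1)\abs{h(x)}^2/\abs{e(x)}^2\, dx = \norm{h}^2 + \norm{Sh}^2 < \infty$. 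Cauchy--Schwarz, applied with the weights $(x^2+1)^{\pm 1/2}$, then gives
\[
\left\lvert \int_\R \frac{\cc{f(x) - f_n(x)}\, h(x)}{\abs{e(x)}^2}\, dx \right\rvert
\le \norm{f - f_n}_{\cB(e_1)} \bigl(\norm{h}^2 + \norm{Sh}^2\bigr)^{1/2} \to 0,
\]
which simultaneously shows absolute convergence of the limiting integral and yields the desired identity for the main term. Assembling this with the boundary contributions from the first paragraph produces \eqref{eq:duality-for-K}.
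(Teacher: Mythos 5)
Your proof is correct, but it is organized differently from the paper's. The paper defines $\fI(g)$ to be the right-hand side of \eqref{eq:duality-for-K}, proves that it is a \emph{bounded} functional on $\cF_{+1}$ --- estimating the integral term by $\norm{f}_{\cB(e_1)}\norm{h}_{+2}$ and invoking the equivalence of the $e_\cF$- and $e_1$-norms on $\cF_{-1}=\cB(e_\cF)$ together with the orthogonal decomposition $\norm{g}_{+\cF}^2=\norm{h}_{+2}^2+(\abs{a^{(+)}}^2+\abs{a^{(-)}}^2)\norm{k(\cdot,i)}_{+\cF}^2$ --- and then concludes by noting that $\fI$ agrees with $\inner{f}{\cdot}$ when $f(z)\in\cB$, so that density of $\cB$ in $\cF_{-1}$ and continuity in $\psi$ give the identity. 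You instead fix $g(z)$, split it via \eqref{eq:adjoint-S-dom}, evaluate the two kernel contributions exactly from $\dual{\psi}{k(\cdot,w)}_\cF=\cc{f(w)}=f^{\#}(\cc{w})$, and treat the $\dom(S)$-component by the approximation $f_n(z)=zg_n(z)+h_0(z)$ from the proof of Proposition~\ref{prop:map-f-assoc}, justifying the passage to the limit under the integral with a weighted Cauchy--Schwarz estimate (weights $(x^2+1)^{\pm1/2}$), which also gives absolute convergence of the limiting integral. What each buys: your route avoids any appeal to $\cF_{-1}=\cB(e_\cF)$ and the norm-equivalence argument, at the price of leaning on the bijection and identification established in Proposition~\ref{prop:map-f-assoc} (needed to know that $f_n\to\psi$, not merely that $\{f_n\}$ is Cauchy --- you should make that injectivity step explicit); the paper's route is shorter once the realization of $\cF_{-1}$ as a dB space is in hand and records the continuity bound $\abs{\fI(g)}\le C\norm{f}_{-\cF}\norm{g}_{+\cF}$ along the way. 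Both arguments are sound.
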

\begin{proof}
  We have $\cF_{-1}=\cB(e_\cF)$ for some $e_\cF(z)\in\cHB$. Also,
  since $\cB=\cB(e)$ for some $e(z)\in\cHB$ free of real zeros (see
  Remark~\ref*{rem:HB-no-real-zeros}), $\cF_{-1} = \cB(e_1)$ setwise
  with $e_1(z):=(z+i)e(z)$. Thus, if $\fI(g)$ denotes the right hand side of
\eqref{eq:duality-for-K}, then
\begin{align*}
\abs{\fI(g)}
	&\le\norm{f}_{\cB(e_1)}\norm{h}_{+2}
	   +\abs{a^{(+)}}\norm{f}_{-\cF}\norm{k(\cdot,-i)}_{+\cF}
	   +\abs{a^{(-)}}\norm{f}_{-\cF}\norm{k(\cdot,i)}_{+\cF}
	\\[1mm]
	&\le C\norm{f}_{-\cF}\left(\norm{h}_{+2}
		   +\left(\abs{a^{(+)}}+\abs{a^{(-)}}\right)
		    \norm{k(\cdot,i)}_{+\cF}\right),
\end{align*}
where the second inequality is implied by the fact that the norms
induced by $e_\cF$ and $e_1$ are equivalent. Now, since
\begin{equation*}
\norm{g}_{+\cF}^2
	= \norm{h}_{+2}^2 + \left(\abs{a^{(+)}}^2+\abs{a^{(-)}}^2\right)
		\norm{k(\cdot,i)}_{+\cF}^2,
\end{equation*}
it follows that, for some possibly different $C>0$,
\begin{equation*}
\abs{\inner{f}{g}_{\cF}}
	\le C\norm{f}_{-\cF}\norm{g}_{+\cF}
\end{equation*}
so $\fI$ defines a continuous functional on $\cF_{+1}$. Finally,
it is easy to verify that $\fI$ coincides with $\inner{f}{\cdot}$
whenever $f(z)\in\cB$ so it is equal to $\dual{\psi}{\cdot}_\cF$.
\end{proof}

Let $\cM_0^{(\gamma)}\subset\cF_{-1}$ be the annihilator of
$\dom(S_\gamma)$, that is,
\begin{equation*}
\cM_0^{(\gamma)}
	:=\left\{g(z)\in\assoc\cB:\dual{g}{h}_\cF=0
	         \text{ for all }h(z)\in\dom(S_\gamma)\right\}.
\end{equation*}

\begin{theorem}
\label{thm:b-minus-2-is-not-dB}
Every element $\fF\in\assoc\cB/\cM_0^{(\gamma)}$ defines a unique element in
$\cB_{-2}^{(\gamma)}$ by the rule
\begin{equation}
\label{eq:description-of-B-2}
\dual{\fF}{h}_2
	:= \dual{f}{h}_\cF,\quad h(z)\in\cF_{+1},
\end{equation}
where $f(z)\in\assoc\cB$ is any representative of $\fF$. Conversely,
to every $\varphi\in\cB_{-2}^{(\gamma)}$ there corresponds a unique
$\fF\in\assoc\cB/\cM_0^{(\gamma)}$ such that
\begin{equation*}
\dual{\varphi}{h}_{2} = \dual{\fF}{h}_{2}
\end{equation*}
in the sense of \eqref{eq:description-of-B-2}.
\end{theorem}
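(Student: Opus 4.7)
The plan is to establish that the assignment $\fF\mapsto\dual{\fF}{\cdot}_2$ gives a bijection $\assoc\cB/\cM_0^{(\gamma)}\to\cB_{-2}^{(\gamma)}$, which covers both assertions. At the abstract level this is the textbook identification $V^*\cong W^*/V^\perp$ applied to $V=\cB_{+2}^{(\gamma)}\subset W=\cF_{+1}$ with $V^\perp=\cM_0^{(\gamma)}$; the work is to render it correctly in the function-theoretic language already developed.

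For well-definedness, I would first read off from \eqref{eq:inner-product-F+} that $\norm{h}_{+\cF}=\norm{h}_{+2}$ whenever $h(z)\in\dom(S_\gamma)=\cB_{+2}^{(\gamma)}$, so for any representative $f(z)\in\assoc\cB\cong\cF_{-1}$ of $\fF$,
\begin{equation*}
\abs{\dual{f}{h}_\cF}
    \le \norm{f}_{-\cF}\norm{h}_{+\cF}
    =   \norm{f}_{-\cF}\norm{h}_{+2},
\end{equation*}
so the restriction to $\cB_{+2}^{(\gamma)}$ is a bounded linear functional, hence an element of $\cB_{-2}^{(\gamma)}$. Independence of the representative is immediate from the definition of $\cM_0^{(\gamma)}$: if $f_1-f_2\in\cM_0^{(\gamma)}$, then $\dual{f_1-f_2}{h}_\cF=0$ for every $h(z)\in\cB_{+2}^{(\gamma)}$.

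For the converse, I would invoke Hahn-Banach. The equality of norms just noted shows that $\cB_{+2}^{(\gamma)}$ is isometrically embedded as a closed subspace of $\cF_{+1}$ (closed because it is complete in the inherited norm). Given $\varphi\in\cB_{-2}^{(\gamma)}=(\cB_{+2}^{(\gamma)})^*$, view it as a bounded functional on this subspace and extend it to a bounded functional $\tilde{\psi}$ on all of $\cF_{+1}$, i.e.\ $\tilde{\psi}\in\cF_{-1}$. Using the isometric identification of $\cF_{-1}$ with $\assoc\cB$ supplied by Proposition~\ref{prop:map-f-assoc} and Theorem~\ref{thm:assoc-with-norm-is-dB}, set $f(z):=\dual{k(\cdot,z)}{\tilde{\psi}}_\cF\in\assoc\cB$ and let $\fF$ be its class modulo $\cM_0^{(\gamma)}$. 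Then for every $h(z)\in\cB_{+2}^{(\gamma)}$,
\begin{equation*}
\dual{\fF}{h}_2
    = \dual{f}{h}_\cF
    = \dual{\tilde{\psi}}{h}_\cF
    = \dual{\varphi}{h}_2,
\end{equation*}
as required. Uniqueness of $\fF$ is again immediate: if two classes produce the same $\varphi$, the difference of any representatives annihilates $\cB_{+2}^{(\gamma)}$ under $\dual{\cdot}{\cdot}_\cF$, so it belongs to $\cM_0^{(\gamma)}$.

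The only nontrivial step is the Hahn-Banach extension, and its legitimacy rests entirely on the isometric inclusion $\cB_{+2}^{(\gamma)}\hookrightarrow\cF_{+1}$ coming from \eqref{eq:inner-product-F+}; once that observation is in place, the rest is bookkeeping using the identifications and pairings already introduced. I would not expect any genuinely delicate obstacle here, since all the analytic content has been packaged by the preceding propositions.
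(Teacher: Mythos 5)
Your argument is correct, and the first half (well-definedness and boundedness via the isometry $\norm{h}_{+\cF}=\norm{h}_{+2}$ on $\dom(S_\gamma)$, plus independence of the representative from the definition of $\cM_0^{(\gamma)}$) coincides with the paper's. For the converse you take a genuinely different route: you extend $\varphi$ abstractly by Hahn--Banach from the closed isometric copy of $\cB_{+2}^{(\gamma)}$ inside $\cF_{+1}$ and then pull the extension back to $\assoc\cB$ via Proposition~\ref{prop:map-f-assoc}, whereas the paper constructs all extensions explicitly: using the codimension-one decomposition \eqref{eq:adjoint-S-usual-dom} of $\dom(S^*)$ it sets $\dual{\varphi_c}{g}_\cF:=\dual{\varphi}{h}_2+\cc{c}\,b$ for each $c\in\C$, checks boundedness directly from \eqref{eq:inner-product-F+}, and exhibits the class as $\fF=\{f_c(z):c\in\C\}$. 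Your version is shorter and needs no structure beyond the isometric embedding; the paper's version buys an explicit description of the fiber over $\varphi$ as a one-complex-parameter family attached to the defect direction $S_\gamma(S_\gamma-iI)^{-1}k(z,i)$, in line with its goal of keeping everything expressed through de Branges-theoretic objects. One small point you should make explicit: identifying your Hahn--Banach extension $\tilde{\psi}$ with an element of $\cF_{-1}$ (defined as the completion of $\cB$ in the $-\cF$ norm) uses that $\cB$ is dense in $(\cF_{+1})^*$, which holds because $\cF_{+1}=\dom(S^*)$ is dense in $\cB$; this is standard in the triplet framework and is the same identification the paper uses implicitly through Proposition~\ref{prop:map-f-assoc}, so it is a remark to add, not a gap.
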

\begin{proof}
  The trueness of the first part of the statement is rather obvious,
  since for every element $g(z)$ of $\dom(S_\gamma)$ one has
  $\norm{g}_{+2}=\norm{g}_{+\cF}$. Indeed,
\begin{equation*}
  \norm{\fF}_{-2}
  = \sup_{g(z)\in\dom(S_\gamma)\setminus\{0\}}
  \frac{\abs{\dual{f}{g}_\cF}}{\norm{g}_{+\cF}}
  \le \norm{f}_{-\cF},
\end{equation*}
for all $f(z)$ within the equivalence class $\fF$.

As for the second part of the statement, consider
$\varphi\in\cB_{-2}^{(\gamma)}$. Given $c\in\C$, define
$\varphi_c\in\cF_{-1}$ by the rule
\begin{equation*}
\dual{\varphi_c}{g}_\cF
	:= \dual{\varphi}{h}_2 + \cc{c}b,
\end{equation*}
where $h(z)\in\dom(S_\gamma)$ and $b\in\C$ are related to $g(z)\in\dom(S^*)$
by the decomposition defining \eqref{eq:adjoint-S-usual-dom}
\cite[Lemma~1.3.1]{albeverio-kurasov}. By Proposition~\ref{prop:map-f-assoc},
$\varphi_c$ can be identified with some $f_c(z)\in\assoc\cB$. Clearly,
\begin{equation*}
\dual{f_c}{\cdot}_\cF\restriction_{\dom(S_\gamma)}
	= \dual{\varphi}{\cdot}_2.
\end{equation*}
Thus, the associated element in $\assoc\cB/\cM_0^{(\gamma)}$ is
$\fF=\left\{f_c(z):c\in\C\right\}$.
\end{proof}


\section{Singular rank-one perturbations}
\label{sec:singular-rank-one-perturbations}

Let us turn to singular rank-one perturbations of selfadjoint
extensions of the operator $S$. To keep the notation simple, let us
fix $\gamma=\pi/2$. We show below that the ``correct'' perturbation of
the operator $S_{\pi/2}$ is performed along the function $s_0(z)$ in
the sense that $S$ is precisely $S_{\pi/2}$ restricted to those
$f(z)\in\dom(S_{\pi/2})$ that obey $\dual{s_0}{f}_2=0$.

\begin{lemma}
Assume $\mu\in\spec(S_{\pi/2})$. Then,
\begin{equation*}
\dual{k(\cdot,\mu)}{g}_{2} = g(\mu)
\end{equation*}
for every $g(z)\in\assoc\cB$.
\end{lemma}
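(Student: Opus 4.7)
The plan is to reduce the statement to the analogous identity for the $\cF$-pairing, which is immediate from the identification between $\cF_{-1}$ and $\assoc\cB$. First I would verify that $k(\cdot,\mu)$ belongs to $\cB_{+2}^{(\pi/2)}=\dom(S_{\pi/2})$, so that the pairing $\dual{k(\cdot,\mu)}{g}_{2}$ makes sense. Since $\mu\in\spec(S_{\pi/2})$ is a real zero of $s_{\pi/2}(z)$, specializing \eqref{eq:k-given-by-s} with $\gamma_0=0$ gives
\begin{equation*}
k(z,\mu) = \frac{s_0(\mu)}{\pi}\,\frac{s_{\pi/2}(z)}{z-\mu},
\end{equation*}
which (with $s_0(\mu)\ne 0$ by the interlacing consequence of Remark~\ref{rem:HB-no-real-zeros}) is a nonzero scalar multiple of the eigenfunction of $S_{\pi/2}$ at $\mu$. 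In particular $k(\cdot,\mu)\in\dom(S_{\pi/2})\subset\cF_{+1}$.

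Next I would show that the pairings $\dual{\cdot}{\cdot}_{2}$ and $\dual{\cdot}{\cdot}_{\cF}$ coincide on $\cF_{-1}\times\cB_{+2}^{(\pi/2)}$. Both are jointly continuous bilinear forms that, when restricted to $\cB\times\cB_{+2}^{(\pi/2)}$, reduce to the $\cB$-inner product $\inner{\cdot}{\cdot}$: for $\dual{\cdot}{\cdot}_{2}$ this is a direct manipulation of \eqref{eq-2pairing-expression} using that $R_{\pi/2}(-i)=(S_{\pi/2}+iI)^{-1}$ is the Hilbert space adjoint of $(S_{\pi/2}-iI)^{-1}$, and for $\dual{\cdot}{\cdot}_{\cF}$ it is built into the construction of $\cF_{-1}$ as a completion of $\cB$. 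Since $\cB$ is dense in $\cF_{-1}$ and the inclusion $\cF_{-1}\hookrightarrow\cB_{-2}^{(\pi/2)}$ is continuous, any $g\in\cF_{-1}$ can be approximated by a single sequence $\{g_n\}\subset\cB$ that converges to $g$ simultaneously in both dual norms, and passing to the limit yields $\dual{g}{h}_{2}=\dual{g}{h}_{\cF}$ for every $h\in\cB_{+2}^{(\pi/2)}$.

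Applying this compatibility to the given $g\in\assoc\cB$ (identified with its representative in $\cF_{-1}$) and $h=k(\cdot,\mu)\in\cB_{+2}^{(\pi/2)}$, and invoking the identification formula $\dual{k(\cdot,z)}{f}_{\cF}=f(z)$ stated immediately after Proposition~\ref{prop:map-f-assoc} at $z=\mu$, I would conclude
\begin{equation*}
\dual{k(\cdot,\mu)}{g}_{2}=\dual{k(\cdot,\mu)}{g}_{\cF}=g(\mu).
\end{equation*}
The main obstacle lies in the compatibility step: it requires carefully checking that two a priori different continuous extensions, one arising from the triplet $(\cB_{+2}^{(\pi/2)},\cB,\cB_{-2}^{(\pi/2)})$ and the other from $(\cF_{+1},\cB,\cF_{-1})$, of the same bilinear form on $\cB\times\cB_{+2}^{(\pi/2)}$ genuinely coincide on their common domain. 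The key enabling facts are the continuous inclusion $\cF_{-1}\hookrightarrow\cB_{-2}^{(\pi/2)}$ and the observation from \eqref{eq:norm+2} and the remark following \eqref{eq:inner-product-F+} that $\norm{\cdot}_{+\cF}$ and $\norm{\cdot}_{+2}$ coincide on $\dom(S_{\pi/2})$.
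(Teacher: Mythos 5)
Your proof is correct, but it is organized differently from the paper's. The paper stays inside the triplet $\cB_{+2}^{(\pi/2)}\subset\cB\subset\cB_{-2}^{(\pi/2)}$: it takes a sequence $\{h_l(z)\}\subset\cB$ converging to $g$ in $\cB_{-2}^{(\pi/2)}$, uses $k(\cdot,\mu)\in\dom(S_{\pi/2})$ to bound $\abs{h_l(\mu)-h_m(\mu)}\le\norm{k(\cdot,\mu)}_{+2}\norm{h_l-h_m}_{-2}$, and identifies the limit with $g(\mu)$ ``by standard arguments.'' You instead route the statement through the $\cF$-scale: you first verify via \eqref{eq:k-given-by-s} (with $\gamma_0=0$ and $s_{\pi/2}(\mu)=0$) that $k(z,\mu)$ is a multiple of the eigenfunction $s_{\pi/2}(z)/(z-\mu)$, hence lies in $\dom(S_{\pi/2})$ --- a fact the paper merely asserts --- and then show that the pairings $\dual{\cdot}{\cdot}_2$ and $\dual{\cdot}{\cdot}_\cF$ agree when the right entry is in $\cB_{+2}^{(\pi/2)}$, so that the evaluation identity $\dual{k(\cdot,z)}{f}_\cF=f(z)$ finishes the argument. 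Your compatibility step is sound: both pairings restrict to $\inner{\cdot}{\cdot}$ on $\cB\times\cB_{+2}^{(\pi/2)}$, both obey the bounds $\abs{\dual{\varphi}{h}_2}\le\norm{\varphi}_{-2}\norm{h}_{+2}$ and $\abs{\dual{\psi}{h}_\cF}\le\norm{\psi}_{-\cF}\norm{h}_{+\cF}$, and $\norm{\cdot}_{-2}\le\norm{\cdot}_{-\cF}$ on $\cB$ because $\dom(S_{\pi/2})\subset\dom(S^*)$ with equal graph norms; note, however, that this compatibility is essentially the first half of Theorem~\ref{thm:b-minus-2-is-not-dB} (formula \eqref{eq:description-of-B-2}), so you could simply cite it instead of redoing the density argument. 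What your route buys is that the step the paper leaves to ``standard arguments'' --- identifying the limit of $h_l(\mu)$ with $g(\mu)$ --- becomes explicit, since the pointwise content is carried by $\dual{k(\cdot,z)}{f}_\cF=f(z)$, already established via Proposition~\ref{prop:map-f-assoc} and the identification following Theorem~\ref{thm:assoc-with-norm-is-dB}. Two minor remarks: the map $\cF_{-1}\to\cB_{-2}^{(\pi/2)}$ is a continuous map with dense range rather than a literal embedding (the paper's chain makes the same abuse of notation), but nothing in your argument depends on injectivity; and the interlacing observation $s_0(\mu)\ne0$ is a harmless aside not needed for the computation.
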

\begin{proof}
Given $g(z)\in\assoc\cB$, let $\{h_l(z)\}\subset\cB$ be any sequence converging
to it in $\cB_{-2}^{(\pi/2)}$.
Then, recalling that $k(z,\mu)\in\dom(S_{\pi/2})$, one has
\begin{equation*}
\abs{h_l(\mu)-h_m(\mu)}
	=   \abs{\dual{k(\cdot,\mu)}{(h_l-h_m)}_2}
	\le \norm{k(\cdot,\mu)}_{+2}\norm{h_l-h_m}_{-2}
\end{equation*}
so $\{h_l(\mu)\}$ is convergent. By standard arguments the limit equals
$g(\mu)$.
\end{proof}

\begin{lemma}
\label{lem:resolvent-on-s}
For every $w\not\in\spec(S_{\pi/2})$,
\begin{equation*}
\hat{R}_{\pi/2}(w)s_0(z) = -\frac{\pi}{s_{\pi/2}(w)}k(z,\cc{w}).
\end{equation*}
\end{lemma}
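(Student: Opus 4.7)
The plan is to reduce to the ordinary (unextended) resolvent by approximating $s_0(z)$ by a sequence in $\cB$ and then passing to the limit. Since $s_0(z)\in\assoc\cB=z\cB+\cB$, write $s_0(z)=zg(z)+h(z)$ with $g(z),h(z)\in\cB$, and pick $\{g_n(z)\}\subset\dom(S)$ with $g_n\to g$ in $\cB$. Set $f_n(z):=zg_n(z)+h(z)\in\cB$. As established in the proof of Proposition~\ref{prop:map-f-assoc}, $\{f_n\}$ converges to $s_0$ both in $\cF_{-1}$ (the Cauchy estimate based on $\norm{l}_{+\cF}$) and uniformly on compact subsets of $\C$ (the estimate \eqref{eq:convergent-compact-2} using the reproducing kernel of $\cB(e_1)$).

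Because the pairings $\dual{\cdot}{\cdot}_2$ and $\dual{\cdot}{\cdot}_\cF$ coincide on $\cB_{+2}^{(\pi/2)}\subset\cF_{+1}$ and $\norm{g}_{+2}=\norm{g}_{+\cF}$ there, one has the continuous embedding $\cF_{-1}\hookrightarrow\cB_{-2}^{(\pi/2)}$ with $\norm{\cdot}_{-2}\le\norm{\cdot}_{-\cF}$. Consequently $f_n\to s_0$ also in $\cB_{-2}^{(\pi/2)}$, and applying the bounded operator $\hat{R}_{\pi/2}(w):\cB_{-2}^{(\pi/2)}\to\cB$ yields $R_{\pi/2}(w)f_n\to\hat{R}_{\pi/2}(w)s_0$ in $\cB$, hence pointwise via the reproducing kernel of $\cB$.

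On the other hand, formula \eqref{eq:dom-selfadj} gives the explicit expression
\[
R_{\pi/2}(w)f_n(z)=\frac{f_n(z)-\frac{s_{\pi/2}(z)}{s_{\pi/2}(w)}f_n(w)}{z-w},
\]
which, in view of the pointwise convergence $f_n\to s_0$ established above, tends for each $z\ne w$ to
\[
\frac{s_0(z)-\frac{s_{\pi/2}(z)}{s_{\pi/2}(w)}s_0(w)}{z-w}.
\]
By the reproducing kernel formula \eqref{eq:k-given-by-s} with $\gamma_0=0$, this quantity equals $-\pi k(z,\cc{w})/s_{\pi/2}(w)$, and uniqueness of pointwise limits completes the proof.

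The whole argument rests on the approximation scheme of Proposition~\ref{prop:map-f-assoc} producing convergence in two topologies at once (norm convergence in $\cF_{-1}$ and uniform convergence on compacta), which is the only slightly delicate point and is already contained in that proof. Once this is at hand the identity is just the observation that \eqref{eq:dom-selfadj} and \eqref{eq:k-given-by-s} combine algebraically to give exactly the claimed formula, so no serious obstacle is expected.
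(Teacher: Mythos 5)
Your proof is correct, but it follows a genuinely different route from the paper's. The paper argues ``weakly'': it pairs $\hat{R}_{\pi/2}(w)s_0$ against the orthogonal basis of eigenfunctions $k(\cdot,\mu_n)$, $\mu_n\in\spec(S_{\pi/2})$, using the preceding lemma ($\dual{k(\cdot,\mu)}{g}_2=g(\mu)$ for $g\in\assoc\cB$) to evaluate $\inner{s_0}{k(\cdot,\mu_n)}_2=s_0(\mu_n)$, and then invokes \eqref{eq:k-given-by-s} to recognize $(\mu_n-\cc{w})^{-1}s_0(\mu_n)$ as $-\pi k(\cc{w},\mu_n)/s_{\pi/2}(\cc{w})$. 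You argue ``strongly'': you realize $s_0$ in $\cB_{-2}^{(\pi/2)}$ as the limit of $f_n=zg_n+h\in\cB$ (the approximation scheme of Proposition~\ref{prop:map-f-assoc}), use the norm inequality $\norm{\cdot}_{-2}\le\norm{\cdot}_{-\cF}$ on $\cB$ (which is exactly the scale inclusion $\cF_{-1}\subset\cB_{-2}^{(\gamma)}$ the paper already asserts, coming from $\norm{g}_{+2}=\norm{g}_{+\cF}$ on $\dom(S_\gamma)$) together with boundedness of $\hat{R}_{\pi/2}(w):\cB_{-2}^{(\pi/2)}\to\cB$, and then pass to the limit in the explicit resolvent formula from \eqref{eq:dom-selfadj}, finishing with the same kernel identity \eqref{eq:k-given-by-s}. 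Your identification of the limit with $s_0$ is the canonical one (via Proposition~\ref{prop:map-f-assoc} and the inclusion $\cF_{-1}\subset\cB_{-2}^{(\pi/2)}$), so there is no ambiguity there; the only cosmetic point is that your limit computation gives the identity for $z\ne w$, and you should note it extends to $z=w$ since both sides are entire. Comparing the two: the paper's argument is shorter once its Lemma on $\dual{k(\cdot,\mu)}{g}_2=g(\mu)$ is available, but it leans on the discrete eigenfunction basis of $S_{\pi/2}$ (i.e., on the standing regularity assumption of Remark~\ref{rem:HB-no-real-zeros}); your argument bypasses both that lemma and the eigenbasis, at the cost of re-using the two-topology convergence (in $\cF_{-1}$ and locally uniformly) established in Proposition~\ref{prop:map-f-assoc}, and it makes explicit how $s_0$ is realized as an element of $\cB_{-2}^{(\pi/2)}$.
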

\begin{proof}
We need to show that
\begin{equation*}
\inner{\hat{R}_{\pi/2}(w)s_0(\cdot)}{f(\cdot)}
	= -\frac{\pi}{s_{\pi/2}(\cc{w})}f(\cc{w})
\end{equation*}
for every $f(z)\in\cB$. Due to the continuity of the inner product, it will
suffice to show the assertion on elements of a basis of $\cB$. Thus, let
us consider $k(z,\mu_n)$ with $\mu_n\in\spec(S_{\pi/2})$. We have
\begin{align*}
\inner{\hat{R}_{\pi/2}(w)s_0(\cdot)}{k(\cdot,\mu_n)}
	&= \inner{s_0(\cdot)}{(S_{\pi/2} - \cc{w} I)^{-1}k(\cdot,\mu_n)}_{2}
\\[1mm]
	&= \inner{s_0(\cdot)}{(\mu_n - \cc{w})^{-1}k(\cdot,\mu_n)}_{2}
\\[1mm]
	&= (\mu_n - \cc{w})^{-1}\inner{s_0(\cdot)}{k(\cdot,\mu_n)}_{2}
\\[1mm]
	&= (\mu_n - \cc{w})^{-1}s_0(\mu_n),
\end{align*}
where we have used the previous lemma and the fact that $s_0(z)$ is real entire.

Recalling \eqref{eq:k-given-by-s} we have
\begin{equation*}
k(\cc{w},\mu_n)= \frac{s_{\pi/2}(\cc{w})s_0(\mu_n)}{\pi(\cc{w}-\mu_n)},
\end{equation*}
whence the assertion follows.
\end{proof}

\begin{lemma}
The function $s_{0}(z)$ is a generating element (cyclic in the
terminology of \cite{gesztesy-simon}) of $S_{\pi/2}$.
\end{lemma}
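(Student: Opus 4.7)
The plan is to show that the closed linear span of $\{\hat{R}_{\pi/2}(w)s_0 : w\in\C\setminus\R\}$ coincides with all of $\cB$; this is the standard formulation of cyclicity for an element of $\cB_{-2}^{(\pi/2)}$ in the triplet-of-Hilbert-spaces framework used in \cite{gesztesy-simon} (note that $s_0(z)\in\assoc\cB\subset\cF_{-1}\subset\cB_{-2}^{(\pi/2)}$ in view of Proposition~\ref{prop:map-f-assoc} and Theorem~\ref{thm:assoc-with-norm-is-dB}, so the extended resolvent does act on $s_0$).

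The argument is a two-step reduction. First, Lemma~\ref{lem:resolvent-on-s} identifies
\begin{equation*}
\hat{R}_{\pi/2}(w)s_0(z) = -\frac{\pi}{s_{\pi/2}(w)}k(z,\cc{w})
\end{equation*}
for every $w\not\in\spec(S_{\pi/2})$, and the prefactor $-\pi/s_{\pi/2}(w)$ is nonzero whenever $w\in\C\setminus\R$ (since $\spec(S_{\pi/2})\subset\R$ by selfadjointness). Consequently, the span of $\{\hat{R}_{\pi/2}(w)s_0 : w\in\C\setminus\R\}$ equals the span of $\{k(\cdot,\cc{w}) : w\in\C\setminus\R\}$, i.e.\ the linear span of reproducing kernels evaluated at nonreal points.

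Second, I would verify totality of this family by the usual reproducing-kernel argument: if $f(z)\in\cB$ satisfies $\inner{k(\cdot,\zeta)}{f}=0$ for every $\zeta\in\C\setminus\R$, then $f(\zeta)=0$ on $\C\setminus\R$, and since $f(z)$ is entire, this forces $f\equiv 0$. Density of the span in $\cB$ follows, completing the proof.

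There is no substantive obstacle here: Lemma~\ref{lem:resolvent-on-s} does essentially all of the work, and the remainder is the standard observation that the reproducing kernel separates points of an entire-function Hilbert space. The only minor issue is making sure that the extended resolvent acts correctly on the $\assoc\cB$-realization of $\cF_{-1}$, which is guaranteed by the identification already established in Section~\ref{sec:scale-db-spaces}.
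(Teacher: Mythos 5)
Your proof is correct, and its core is the same as the paper's: both reduce cyclicity to Lemma~\ref{lem:resolvent-on-s}, which converts $\hat{R}_{\pi/2}(w)s_0(z)$ into a nonzero multiple of $k(z,\cc{w})$ (the prefactor is indeed harmless, since the zeros of $s_{\pi/2}$ are exactly $\spec(S_{\pi/2})\subset\R$), so everything hinges on the totality of $\{k(\cdot,\cc{w}):w\in\C\setminus\R\}$ in $\cB$. Where you diverge is in how you justify that totality: the paper invokes the complete nonselfadjointness (simplicity) of $S$, under which $\cB$ is the closed span of the deficiency subspaces $\ker(S^*-wI)=\lspan\{k(\cdot,\cc{w})\}$, whereas you argue directly from the reproducing-kernel structure — an $f\in\cB$ orthogonal to all kernels at nonreal points vanishes on $\C\setminus\R$ and, being entire, vanishes identically. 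Your route is more elementary and self-contained (it needs nothing beyond the fact that $\cB$ is a reproducing kernel Hilbert space of entire functions), while the paper's phrasing makes explicit the operator-theoretic mechanism (simplicity of $S$ and the deficiency-space picture) that underlies the density statement; both are complete arguments, and your preliminary remark that $s_0\in\assoc\cB\subset\cF_{-1}\subset\cB_{-2}^{(\pi/2)}$ correctly settles the only point of rigor about applying the extended resolvent to $s_0$.
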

\begin{proof}
Since $S$ is simple and $k(z,\cc{w})\in\ker(S^*- w I)$, $\cB$ is the closure of
\begin{equation*}
\lspan_{w\in\C\setminus\R}\left\{\ker(S^*-wI)\right\}
	= \lspan_{w\in\C\setminus\R}\left\{k(z,\cc{w})\right\}.
\end{equation*}
This identity along with Lemma~\ref{lem:resolvent-on-s} implies that
\begin{equation*}
\lspan_{w\in\C\setminus\R}\left\{\hat{R}_{\pi/2}(w)s_0(z)\right\}
\end{equation*}
is a total set in $\cB$.
\end{proof}

\begin{proposition}
\label{prop:S-as-restriction}
Define $\cD_0:=\{f(z)\in\dom(S_{\pi/2}):\dual{s_0}{f}_2=0\}$.
Then,
\begin{equation*}
S = S_{\pi/2}\restriction_{\cD_0}.
\end{equation*}
\end{proposition}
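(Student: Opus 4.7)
The strategy is to leverage that $S \subset S_{\pi/2}$ with $\dim\bigl(\dom(S_{\pi/2})/\dom(S)\bigr)=1$, since $S$ has deficiency indices $(1,1)$. It therefore suffices to establish two things: (i) $\dom(S)\subseteq\cD_0$, and (ii) the linear functional $\dual{s_0}{\cdot}_2$ is not identically zero on $\dom(S_{\pi/2})$, so that $\cD_0$ is a proper subspace of $\dom(S_{\pi/2})$. Combined with the obvious inclusion $\cD_0\subseteq\dom(S_{\pi/2})$, these two facts will force $\cD_0=\dom(S)$ by dimension counting.

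For (i), I would unfold $\dual{s_0}{f}_2$ using the pairing formula \eqref{eq-2pairing-expression} and Lemma~\ref{lem:resolvent-on-s} applied at $w=-i$. Since $s_{\pi/2}(z)$ is real entire we have $\cc{s_{\pi/2}(-i)}=s_{\pi/2}(i)$, so the reproducing property yields
\[
\dual{s_0}{f}_2 \;=\; \inner{\hat{R}_{\pi/2}(-i)s_0}{(S_{\pi/2}-iI)f} \;=\; -\frac{\pi}{s_{\pi/2}(i)}\,\bigl((S_{\pi/2}-iI)f\bigr)(i)
\]
for every $f(z)\in\dom(S_{\pi/2})$. Now for $f\in\dom(S)$ one has $(S_{\pi/2}f)(z)=(Sf)(z)=zf(z)$ pointwise, so the bracket evaluated at $z=i$ equals $i f(i)-if(i)=0$, proving (i).

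For (ii), I would exhibit an element on which the same functional is nonzero. The natural candidate is $g(z):=(S_{\pi/2}-iI)^{-1}k(z,i)$, which belongs to $\dom(S_{\pi/2})$ because $i\notin\spec(S_{\pi/2})$ (selfadjointness) and $k(\cdot,i)\in\cB$. Applying the displayed identity to this $g$ gives
\[
\dual{s_0}{g}_2 \;=\; -\frac{\pi}{s_{\pi/2}(i)}\,k(i,i) \;\ne\; 0,
\]
since $k(i,i)>0$ and $s_{\pi/2}(i)\ne 0$ (the zeros of $s_{\pi/2}$ coincide with $\spec(S_{\pi/2})\subset\R$). Hence $g(z)\in\dom(S_{\pi/2})\setminus\cD_0$, establishing (ii).

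I do not anticipate a genuine obstacle; the argument is essentially a direct assembly of Lemma~\ref{lem:resolvent-on-s}, the reproducing property, and a dimension count. The only subtlety worth flagging is that $S_{\pi/2}$ does \emph{not} act as pointwise multiplication by $z$ outside $\dom(S)$ (as evidenced by \eqref{eq:adjoint-S-action}), which is exactly what prevents the computation in step (i) from trivializing on all of $\dom(S_{\pi/2})$ and thereby makes step (ii) possible.
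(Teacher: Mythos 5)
Your proposal is correct. The inclusion $\dom(S)\subseteq\cD_0$ is obtained exactly as in the paper: both arguments unfold $\dual{s_0}{f}_2$ via the pairing formula \eqref{eq-2pairing-expression} together with Lemma~\ref{lem:resolvent-on-s} and the reproducing property (you work at $w=-i$, the paper at $w=i$; this is immaterial). Where you genuinely diverge is the reverse inclusion. The paper proves it by computing $\dual{s_0}{f}_2$ for an arbitrary $f(z)\in\dom(S_{\pi/2})$ written in the parametrized form \eqref{eq:definition-selfadj}, which gives (up to a nonzero constant) the value $g(-i)$ of the representing function $g(z)\in\cB$; hence $\dual{s_0}{f}_2=0$ forces $g(-i)=0$, i.e. $f(z)\in\dom(S)$. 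You instead bypass that computation with a codimension count: since $S$ is closed with deficiency indices $(1,1)$, $\dom(S)$ has codimension one in $\dom(S_{\pi/2})$, and $\cD_0$, being the kernel of the linear functional $\dual{s_0}{\cdot}_2$, is either all of $\dom(S_{\pi/2})$ or a codimension-one subspace containing $\dom(S)$; your witness $(S_{\pi/2}-iI)^{-1}k(\cdot,i)$, on which the functional takes the value $-\pi k(i,i)/s_{\pi/2}(i)\neq 0$, excludes the first alternative, and your justifications ($k(i,i)>0$, $s_{\pi/2}(i)\neq 0$ because the zeros of $s_{\pi/2}$ form the real spectrum of $S_{\pi/2}$) are sound. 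Your route is a bit more abstract and shorter on the reverse inclusion; the paper's route has the advantage of producing the explicit identity $\dual{s_0}{R_{\pi/2}(i)f}_2=-\pi f(i)/s_{\pi/2}(i)$ for general $f(z)\in\cB$ (which is in substance your displayed formula), an identity the paper reuses later, e.g. as \eqref{eq:boring} in the proof of Theorem~\ref{teo:dom-selfadj-alternative}.
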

\begin{proof}
First, choose any $f(z)\in\dom(S)$ and set $g(z):=(z+i)f(z)$; 
clearly $g(z)\in\cB$. Then, according to \eqref{eq-2pairing-expression},
\begin{align*}
\dual{s_0}{f}_{2}
	&= \inner{\hat{R}_{\pi/2}(i)s_0(\cdot)}{(S_{\pi/2} + iI)f(\cdot)}
	\\[1mm]
	&= -\frac{\pi}{s_{\pi/2}(-i)}\inner{k(\cdot,-i)}{g(\cdot)}
	\\[1mm]
	&= g(-i) = 0.
\end{align*}
Therefore, $\dom(S)\subset\cD_0$.

Now suppose $f(z)\in\cD_0$. Since $f(z)\in\dom(S_{\pi/2})$, it follows
from \eqref{eq:definition-selfadj} that
\begin{equation*}
(S_{\pi/2} + iI)f(z)
	= (z+i)f(z) + \frac{s_{\pi/2}(z)}{s_{\pi/2}(-i)}g(-i)\,,
\end{equation*}
where $g(z)\in\cB$ satisfies
\begin{equation*}
f(z) = \frac{g(z)-\frac{s_{\pi/2}(z)}{s_{\pi/2}(-i)}g(-i)}{z+i}.
\end{equation*}
A computation like the one above yields
\begin{align*}
\dual{s_0}{f}_{2}
	 = \inner{\hat{R}_{\pi/2}(i)s_0(\cdot)}{(S_{\pi/2} + iI)f(\cdot)}
	 = g(-i)
\end{align*}
so the assumption $\dual{s_0}{f}_{2}=0$ implies $g(-i)=0$
in turn implying $f(z)\in\dom(S)$. This completes the proof.
\end{proof}

Proposition \ref{prop:S-as-restriction} implies that any other selfadjoint
$S_\gamma$ of $S$ is related to $S_{\pi/2}$ by  Krein's formula
\cite[Theorem~1.2.1]{albeverio-kurasov},
\begin{equation}
\label{eq:krein-formula}
R_{\gamma}(w) - R_{\pi/2}(w)
	= \frac{1}{\lambda-q(w)}\inner{\hat{R}_{\pi/2}(\cc{w})s_0(\cdot)}{\cdot}
	  \hat{R}_{\pi/2}(w)s_0(z)
\end{equation}
with $w:\im(w)\ne 0$, where
\begin{equation}
\label{eq:function-q}
q(w) := \inner{\hat{R}_{\pi/2}(i)s_0(\cdot)}
		{(I+wS_{\pi/2})R_{\pi/2}(w)\hat{R}_{\pi/2}(i)s_0(\cdot)}
\end{equation}
is the Krein's $Q$-function. Below we find the relation between
$\lambda$ and $\gamma$.

\begin{lemma}
\label{lem:on-function-q}
Let $q(w)$ be the function defined by \eqref{eq:function-q}. Then,
\begin{equation}
\label{eq:identity-for-q}
	q(w) = \pi\re\left(\frac{s_0(i)}{s_{\pi/2}(i)}\right) -
		   \pi\frac{s_0(w)}{s_{\pi/2}(w)}.
\end{equation}
\end{lemma}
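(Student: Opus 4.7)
The strategy is to reduce every occurrence of the operator $S_{\pi/2}$ and its resolvent to scalar expressions involving the reproducing kernel and the functions $s_0, s_{\pi/2}$, using Lemma~\ref{lem:resolvent-on-s} repeatedly. The core computation then amounts to evaluating inner products of kernel functions and simplifying with the explicit formula \eqref{eq:k-given-by-s}.

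First, I would rewrite the ``bracket'' argument of $q(w)$. The elementary identity
\begin{equation*}
(I + wS_{\pi/2})R_{\pi/2}(w) = (1+w^2)R_{\pi/2}(w) + wI,
\end{equation*}
obtained from $S_{\pi/2}R_{\pi/2}(w) = I + wR_{\pi/2}(w)$, lets me split $q(w)$ into two pieces. Next, using the extended resolvent identity \eqref{eq:ext-res-identity} with $v = i$ and $\varphi = s_0$, I get
\begin{equation*}
R_{\pi/2}(w)\hat{R}_{\pi/2}(i)s_0 = \frac{1}{w-i}\bigl[\hat{R}_{\pi/2}(w)s_0 - \hat{R}_{\pi/2}(i)s_0\bigr],
\end{equation*}
and Lemma~\ref{lem:resolvent-on-s} converts both terms into reproducing kernels. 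Combining, a pleasant cancellation of $(w-i)$ against $(1+w^2)=(w-i)(w+i)$ yields
\begin{equation*}
(I+wS_{\pi/2})R_{\pi/2}(w)\hat{R}_{\pi/2}(i)s_0(z)
  = -\frac{\pi(w+i)}{s_{\pi/2}(w)}k(z,\cc{w}) + \frac{i\pi}{s_{\pi/2}(i)}k(z,-i),
\end{equation*}
after collecting the $k(z,-i)$ terms (the coefficients $\pi(w+i)-w\pi=i\pi$ combine cleanly).

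Now I take the inner product against $\hat{R}_{\pi/2}(i)s_0(z) = -\pi s_{\pi/2}(i)^{-1}k(z,-i)$ (again by Lemma~\ref{lem:resolvent-on-s}) and use the reproducing property $\inner{k(\cdot,v)}{k(\cdot,w)} = k(v,w)$. This gives
\begin{equation*}
q(w) = \frac{\pi^2(w+i)}{\cc{s_{\pi/2}(i)}\,s_{\pi/2}(w)}k(-i,\cc{w}) - \frac{i\pi^2}{|s_{\pi/2}(i)|^2}k(-i,-i).
\end{equation*}
Substituting \eqref{eq:k-given-by-s} with $\gamma_0 = 0$ gives
\begin{equation*}
k(-i,\cc{w}) = -\frac{\cc{s_{\pi/2}(i)}s_0(w) - s_{\pi/2}(w)\cc{s_0(i)}}{\pi(w+i)},
\qquad
k(-i,-i) = \frac{s_{\pi/2}(i)\cc{s_0(i)} - \cc{s_{\pi/2}(i)}s_0(i)}{-2\pi i},
\end{equation*}
where I use that $s_0$ and $s_{\pi/2}$ are real entire (so $s_\gamma(-i) = \cc{s_\gamma(i)}$ for $\gamma=0,\pi/2$). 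The factor $(w+i)$ cancels in the first piece, leaving
\begin{equation*}
q(w) = -\pi\,\frac{s_0(w)}{s_{\pi/2}(w)} + \pi\,\frac{\cc{s_0(i)}}{\cc{s_{\pi/2}(i)}} - \frac{i\pi\,\im\!\bigl(s_{\pi/2}(i)\cc{s_0(i)}\bigr)}{|s_{\pi/2}(i)|^2}.
\end{equation*}

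The final step is algebraic: I must verify that the last two terms add up to $\pi\re(s_0(i)/s_{\pi/2}(i))$. Writing $s_0(i)/s_{\pi/2}(i) = u + iv$, one has $\cc{s_0(i)}/\cc{s_{\pi/2}(i)} = u - iv$ and $s_{\pi/2}(i)\cc{s_0(i)}/|s_{\pi/2}(i)|^2 = u - iv$, so $\im(s_{\pi/2}(i)\cc{s_0(i)})/|s_{\pi/2}(i)|^2 = -v$; the imaginary parts cancel and $u$ remains. This delivers \eqref{eq:identity-for-q}.

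The bookkeeping of complex conjugates (exploiting $s_\gamma^\# = s_\gamma$ for real $\gamma$) is really the only delicate point; once one commits to substituting \eqref{eq:k-given-by-s} and keeping $s_{\pi/2}(-i) = \cc{s_{\pi/2}(i)}$ explicit, the main obstacle is confirming the final cancellation yielding the real part, which I expect to be the most error-prone piece of arithmetic.
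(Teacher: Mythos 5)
Your argument is correct and is essentially the computation the paper itself performs: the same splitting $(I+wS_{\pi/2})R_{\pi/2}(w)=wI+(1+w^{2})R_{\pi/2}(w)$, reduction of everything to reproducing kernels via Lemma~\ref{lem:resolvent-on-s}, and the final evaluation through \eqref{eq:k-given-by-s}; the only cosmetic difference is that you treat the resolvent term with the identity \eqref{eq:ext-res-identity} applied to $s_0$, whereas the paper first converts $\hat{R}_{\pi/2}(i)s_0$ into $k(\cdot,-i)$ and then invokes Lemma~\ref{lem:quotient-diff-reprod-kernel}. One small slip to fix: with $\gamma_0=0$, formula \eqref{eq:k-given-by-s} gives $k(-i,-i)=\bigl(\cc{s_{\pi/2}(i)}\,s_0(i)-s_{\pi/2}(i)\,\cc{s_0(i)}\bigr)/(-2\pi i)$, the negative of what you display; however, the third term in your final expression for $q(w)$ is the one that corresponds to this correct value (it equals $i\pi\im\bigl(s_0(i)/s_{\pi/2}(i)\bigr)$), so the two sign flips cancel, your concluding cancellation is valid, and \eqref{eq:identity-for-q} follows as claimed.
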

\begin{proof}
Since $(I+wS_{\pi/2})(S_{\pi/2}-wI)^{-1}=wI +
(w^2+1)(S_{\pi/2}-wI)^{-1}$, it follows from
Lemma~\ref{lem:resolvent-on-s} that
\begin{equation*}
q(w) = \frac{\pi^2}{\abs{s_{\pi/2}(i)}^2}wk(i,i) +
       \frac{\pi^2}{\abs{s_{\pi/2}(i)}^2}(w^2+1)
       \inner{k(\cdot,-i)}{(S_{\pi/2}-wI)^{-1}k(\cdot,-i)}.
\end{equation*}
Resorting to Lemma~\ref{lem:quotient-diff-reprod-kernel}, one rewrites
this as follows
\begin{equation*}
q(w) = \frac{\pi^2}{\abs{s_{\pi/2}(i)}^2}(w+i)
	   \frac{s_{\pi/2}(i)}{s_{\pi/2}(w)}k(w,i) -
	   \frac{\pi^2}{\abs{s_{\pi/2}(i)}^2}ik(i,i).
\end{equation*}
Finally, a computation involving \eqref{eq:k-given-by-s} yields
\eqref{eq:identity-for-q}.
\end{proof}

\begin{proposition}
\label{lem:quasi-krein}
For $w\in\C\setminus\R$ one has
\begin{equation*}
R_{\gamma}(w) - R_{\pi/2}(w)
	= \frac{1}{\pi\tan\gamma+\pi\frac{s_{0}(w)}{s_{\pi/2}(w)}}
	  \inner{\hat{R}_{\pi/2}(\cc{w})s_0(\cdot)}{\cdot}
	  	  \hat{R}_{\pi/2}(w)s_0(z).
\end{equation*}
Consequently, Krein's formula \eqref{eq:krein-formula} holds true for
$\lambda = \pi\tan\gamma + \pi\re\left(\frac{s_0(i)}{s_{\pi/2}(i)}\right)$.
\end{proposition}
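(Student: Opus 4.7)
The plan is to compute the resolvent difference $R_\gamma(w)-R_{\pi/2}(w)$ directly from the explicit parametrisation (\ref{eq:definition-selfadj}) of $\dom(S_\gamma)$, and then to recast the result as a rank-one operator in terms of the extended resolvent $\hat R_{\pi/2}(w) s_0$ supplied by Lemma~\ref{lem:resolvent-on-s}. Concretely, for every $f(z)\in\cB$ and $w\in\C\setminus\R$, applying (\ref{eq:definition-selfadj}) to both $R_\gamma(w)$ and $R_{\pi/2}(w)$ and subtracting gives
\begin{equation*}
R_\gamma(w)f(z) - R_{\pi/2}(w)f(z)
= \frac{f(w)}{z-w}\left[\frac{s_{\pi/2}(z)}{s_{\pi/2}(w)}-\frac{s_\gamma(z)}{s_\gamma(w)}\right],
\end{equation*}
so the problem reduces to reshaping the bracket into a product of two factors of the advertised form.

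Expanding $s_\gamma$ via (\ref{eq:s-beta}) with $\gamma_0=0$ as $\cos\gamma\,s_0+\sin\gamma\,s_{\pi/2}$, the $\sin\gamma$ contributions cancel and the bracket collapses to $\cos\gamma\,[s_{\pi/2}(z)s_0(w)-s_{\pi/2}(w)s_0(z)]/[s_{\pi/2}(w)s_\gamma(w)]$. By (\ref{eq:k-given-by-s}) with $\gamma_0=0$, the expression inside square brackets equals $\pi(z-w)k(z,\cc w)$, which cancels the $(z-w)^{-1}$ coming from the resolvent. Dividing numerator and denominator of the surviving scalar coefficient by $\cos\gamma\cdot s_{\pi/2}(w)$ recasts it as $1/[\pi\tan\gamma+\pi s_0(w)/s_{\pi/2}(w)]$.

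I would then rewrite $f(w)$ and $k(z,\cc w)$ in terms of $\hat R_{\pi/2}$ applied to $s_0$. Lemma~\ref{lem:resolvent-on-s} evaluated at $w$ yields $k(z,\cc w)=-(s_{\pi/2}(w)/\pi)\hat R_{\pi/2}(w)s_0(z)$, producing the right-hand factor of the desired rank-one operator. For the left-hand factor I would write $f(w)=\inner{k(\cdot,w)}{f(\cdot)}$ and apply the same lemma with $w\mapsto\cc w$. The key observation is that (\ref{eq:functions-s}) gives $s_{\pi/2}^\#=s_{\pi/2}$, hence $s_{\pi/2}(\cc w)=\cc{s_{\pi/2}(w)}$; this conjugate exactly cancels the one generated by the conjugate-linearity of the inner product in its first slot, leaving $f(w)=-(s_{\pi/2}(w)/\pi)\inner{\hat R_{\pi/2}(\cc w)s_0(\cdot)}{f(\cdot)}$. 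Assembling the pieces produces the stated identity.

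The concluding assertion follows by matching the prefactor obtained above with Krein's formula (\ref{eq:krein-formula}) and using Lemma~\ref{lem:on-function-q} to solve for $\lambda$: substituting (\ref{eq:identity-for-q}) into $\lambda=q(w)+\pi\tan\gamma+\pi s_0(w)/s_{\pi/2}(w)$, the $w$-dependent terms cancel exactly, leaving $\lambda=\pi\tan\gamma+\pi\re(s_0(i)/s_{\pi/2}(i))$. The only genuinely delicate step is the conjugation bookkeeping when factoring scalars out of the inner product; once one notes that $s_{\pi/2}$ is real entire, the rest is routine algebra.
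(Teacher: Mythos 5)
Your proposal is correct and follows essentially the same route as the paper's proof: subtract the two resolvents using \eqref{eq:dom-selfadj}, collapse the numerator via \eqref{eq:s-beta} and \eqref{eq:k-given-by-s} to $\pi\cos\gamma\,k(z,\cc{w})$, convert both $k(z,\cc{w})$ and $f(w)=\inner{k(\cdot,w)}{f}$ through Lemma~\ref{lem:resolvent-on-s} (with the conjugation cancelling because $s_{\pi/2}$ is real entire), and identify $\lambda$ by comparing with \eqref{eq:krein-formula} via Lemma~\ref{lem:on-function-q}. Your explicit treatment of the conjugate-linearity bookkeeping is a detail the paper leaves implicit, but the argument is the same.
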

\begin{proof}
  Let $f(z)\in\cB$. By using \eqref{eq:s-beta},
  \eqref{eq:dom-selfadj}, and Lemma~\ref{lem:resolvent-on-s} one
  obtains
\begin{align*}
(R_{\gamma}(w) - R_{\pi/2}(w))f(z)
&=   \frac{s_{\pi/2}(z)s_{\gamma}(w)-s_{\pi/2}(w)s_{\gamma}(z)}{z-w}
     \frac{f(w)}{s_{\pi/2}(w)s_{\gamma}(w)}
     \\
&=   \pi (\cos\gamma) k(z,\cc{w}) \frac{f(w)}{s_{\pi/2}(w)s_{\gamma}(w)}
	 \\[1mm]
&= - \frac{\cos\gamma}{s_\gamma(w)}f(w)\hat{R}_{\pi/2}(w) s_0(z)
	 \\[1mm]
&= 	 \frac{\cos\gamma}{\pi}\frac{s_{\pi/2}(w)}{s_\gamma(w)}
     \inner{\hat{R}_{\pi/2}(\cc{w})s_0(\cdot)}{f(\cdot)}
     		\hat{R}_{\pi/2}(w) s_0(z),
\end{align*}
whence the first statement follows. The second assertion is a consequence of
Lemma~\ref{lem:on-function-q}.
\end{proof}

\begin{theorem}
  \label{teo:dom-selfadj-alternative}
For $\beta\in[0,\pi)\setminus\{\pi/2\}$, the following
characterization holds
\begin{equation*}
\dom(S_\gamma) =
\left\{\begin{gathered}
		g(z) = h(z) + b S_{\pi/2}R_{\pi/2}(-i)\hat{R}_{\pi/2}(i)s_0(z),
		\\
		h(z)\in\dom(S_{\pi/2}), b\in\C:\dual{s_0}{h}_2 =
		\pi b\left(\tan\gamma+\re\frac{s_{0}(i)}{s_{\pi/2}(i)}\right)
       \end{gathered}\right\}.
\end{equation*}
\end{theorem}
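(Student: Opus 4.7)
The strategy is to combine the decomposition $\dom(S^*) = \dom(S_{\pi/2}) + \C\cdot\zeta$, where $\zeta := S_{\pi/2}R_{\pi/2}(-i)\hat{R}_{\pi/2}(i)s_0$, with Krein's formula from Proposition~\ref{lem:quasi-krein}. The decomposition itself is a consequence of \eqref{eq:adjoint-S-usual} adapted to $w=-i$ (the proof is analogous) together with the identification $k(z,-i) = -\pi s_{\pi/2}(i)^{-1}\hat{R}_{\pi/2}(i)s_0$ supplied by Lemma~\ref{lem:resolvent-on-s}; this realises the one-dimensional complement of $\dom(S_{\pi/2})$ in $\dom(S^*)$ as $\C\cdot\zeta$.

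Starting from $g\in\dom(S_\gamma)$, I set $f := (S_\gamma+iI)g \in \cB$ so that $g = R_\gamma(-i)f$, and apply Proposition~\ref{lem:quasi-krein} with $w=-i$ to obtain
\[
g = R_{\pi/2}(-i)f + c\,\hat{R}_{\pi/2}(-i)s_0, \qquad c := \frac{\inner{\hat{R}_{\pi/2}(i)s_0}{f}}{\lambda(-i)},
\]
where $\lambda(-i) := \pi\tan\gamma + \pi s_0(-i)/s_{\pi/2}(-i)$. Since $\hat{R}_{\pi/2}(-i)s_0 \notin \dom(S_{\pi/2})$, the next step is to rewrite it using the identity
\[
\hat{R}_{\pi/2}(-i)s_0 = \zeta - iR_{\pi/2}(-i)\hat{R}_{\pi/2}(i)s_0,
\]
which follows from the resolvent identity \eqref{eq:ext-res-identity} applied to $\hat{R}_{\pi/2}(\pm i)s_0$ together with $S_{\pi/2}R_{\pi/2}(-i) = I - iR_{\pi/2}(-i)$. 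Substituting produces $g = h + b\zeta$ with $b := c$ and $h := R_{\pi/2}(-i)f - icR_{\pi/2}(-i)\hat{R}_{\pi/2}(i)s_0 \in \dom(S_{\pi/2})$.

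The central step is verifying the boundary condition. Combining the pairing formula \eqref{eq-2pairing-expression} for $\varphi = s_0$ with the identity $(S_{\pi/2}-iI)R_{\pi/2}(-i) = I - 2iR_{\pi/2}(-i)$ and the resolvent identity, several cancellations reduce $\dual{s_0}{R_{\pi/2}(-i)f}_2$ to $\inner{\hat{R}_{\pi/2}(i)s_0}{f} = c\lambda(-i)$, while $\dual{s_0}{R_{\pi/2}(-i)\hat{R}_{\pi/2}(i)s_0}_2$ collapses to $\norm{\hat{R}_{\pi/2}(-i)s_0}^2$. A short computation from \eqref{eq:function-q} with $w=-i$, using $(I - iS_{\pi/2})R_{\pi/2}(-i) = -iI$, yields $q(-i) = -i\norm{\hat{R}_{\pi/2}(i)s_0}^2$. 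Putting these together and invoking Lemma~\ref{lem:on-function-q} gives
\[
\dual{s_0}{h}_2 = b\lambda(-i) + bq(-i) = \pi b\bigl(\tan\gamma + \re(s_0(i)/s_{\pi/2}(i))\bigr).
\]

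For the reverse inclusion, I would argue that both $\dom(S_\gamma)$ and the set on the right-hand side are codimension-one subspaces of $\dom(S^*)$: the former because $S_\gamma$ is a proper selfadjoint restriction of $S^*$, and the latter because the stated linear condition defines a codimension-one subspace under the bijection $(h,b)\mapsto h + b\zeta$ from $\dom(S_{\pi/2})\oplus\C$ onto $\dom(S^*)$. Since $\dom(S_\gamma)$ has been shown to be contained in the latter, equality follows. The principal technical obstacle lies in establishing the identity $\hat{R}_{\pi/2}(-i)s_0 = \zeta - iR_{\pi/2}(-i)\hat{R}_{\pi/2}(i)s_0$ and then marshalling the chain of resolvent manipulations in the computation of $\dual{s_0}{h}_2$, where multiple terms must cancel precisely so that the final expression collapses to the $Q$-function form of Lemma~\ref{lem:on-function-q}.
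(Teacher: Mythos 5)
Your argument is correct in substance but takes a genuinely different route from the paper's. For the inclusion $\dom(S_\gamma)\subset\cD$, the paper never invokes Krein's formula: it writes $g$ via the de Branges description \eqref{eq:dom-selfadj} with $w=i$, splits off the $\dom(S_{\pi/2})$ part using \eqref{eq:s-beta}, \eqref{eq:k-given-by-s} and Lemma~\ref{lem:resolvent-on-s}, reads off $b=-\frac{\cos\gamma}{s_\gamma(i)}f(i)$, and checks the boundary condition by a direct kernel computation; you instead write $g=R_\gamma(-i)f$, apply Proposition~\ref{lem:quasi-krein} at $w=-i$, convert $\hat R_{\pi/2}(-i)s_0$ into $\zeta-iR_{\pi/2}(-i)\hat R_{\pi/2}(i)s_0$ via \eqref{eq:ext-res-identity}, and obtain the boundary condition from $\dual{s_0}{R_{\pi/2}(-i)f}_2=\inner{\hat R_{\pi/2}(i)s_0}{f}$, $q(-i)=-i\norm{\hat R_{\pi/2}(i)s_0}^2$ and Lemma~\ref{lem:on-function-q} (the identity $\lambda(-i)+q(-i)=\pi\tan\gamma+\pi\re(s_0(i)/s_{\pi/2}(i))$); I checked these reductions and they do go through (your $\norm{\hat R_{\pi/2}(-i)s_0}^2$ versus $\norm{\hat R_{\pi/2}(i)s_0}^2$ is immaterial since the two norms coincide, and your constant in $k(z,-i)=-\pi s_{\pi/2}(i)^{-1}\hat R_{\pi/2}(i)s_0$ should be $-s_{\pi/2}(i)\pi^{-1}$, a harmless slip). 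For the reverse inclusion the paper argues concretely, showing every $g\in\cD$ equals $R_\gamma(i)l$ for an explicit $l\in\cB$ (equations \eqref{eq:from-dom-to-dom-1}--\eqref{eq:from-dom-to-dom-2}), whereas you replace that computation by a codimension-one counting argument inside $\dom(S^*)$; this is legitimate and shorter, but it has a small gap you should close: you must check that the linear functional $(h,b)\mapsto\dual{s_0}{h}_2-\pi b(\tan\gamma+\re(s_0(i)/s_{\pi/2}(i)))$ is not identically zero on $\dom(S_{\pi/2})\oplus\C$, since otherwise $\cD=\dom(S^*)$ and the dimension count fails. This is immediate, e.g., from Proposition~\ref{prop:S-as-restriction} (the kernel of $\dual{s_0}{\cdot}_2$ on $\dom(S_{\pi/2})$ is exactly $\dom(S)$, a proper subspace) or from $\dual{s_0}{k(\cdot,\mu)}_2=s_0(\mu)\ne0$ for $\mu\in\spec(S_{\pi/2})$, but it must be said. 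You also implicitly use that $\zeta\notin\dom(S_{\pi/2})$, i.e., that the map $(h,b)\mapsto h+b\zeta$ is a bijection onto $\dom(S^*)$; that follows from the $w=-i$ analogue of \eqref{eq:adjoint-S-usual} as you indicate. In exchange for these two remarks, your route buys a cleaner second half (no Krein-formula cancellation computation), while the paper's version has the merit of producing the explicit preimage $f(z)+i\pi b\,k(z,i)/s_{\pi/2}(-i)$ under $R_\gamma(i)$, which is then reused in the proof of Theorem~\ref{thm:action-s-gamma}.
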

\begin{proof}
Let $\cD$ denote the right hand side of the claimed identity.
First, suppose $g(z)\in\dom(S_\gamma)$. According to
\eqref{eq:dom-selfadj}, for some unique $f(z)\in\cB$, one has
\begin{align*}
g(z)  = \frac{f(z)-\frac{s_{\gamma}(z)}{s_{\gamma}(i)}f(i)}{z-i}
	  = \frac{f(z)-\frac{s_{\pi/2}(z)}{s_{\pi/2}(i)}f(i)}{z-i}
		+
		\frac{\frac{s_{\pi/2}(z)}{s_{\pi/2}(i)} -
		\frac{s_{\gamma}(z)}{s_{\gamma}(i)}}{z-i}f(i).
\end{align*}
The first term above belongs to $\dom(S_{\pi/2})$. As for the second
term, we have
\begin{align*}
\text{2nd term}
	&= \frac{\cos\gamma}{s_{\pi/2}(i)s_\gamma(i)}
	   \frac{s_{\pi/2}(z)s_0(i)-s_0(z)s_{\pi/2}(i)}{z-i}f(i)
	\\[1mm]
	&= \frac{\cos\gamma}{s_{\pi/2}(i)s_\gamma(i)}\pi k(z,-i)f(i)
	\\[1mm]
	&= - \frac{\cos\gamma}{s_\gamma(i)}f(i)\hat{R}_{\pi/2}(i)s_0(z);
\end{align*}
in this derivation we use \eqref{eq:k-given-by-s} and
Lemma~\ref{lem:resolvent-on-s}. Now define
\begin{equation*}
h(z) := \frac{f(z)-\frac{s_{\pi/2}(z)}{s_{\pi/2}(i)}f(i)}{z-i}
		- i \frac{\cos\gamma}{s_\gamma(i)}f(i)R_{\pi/2}(-i)
			\hat{R}_{\pi/2}(i)s_0(z);
\end{equation*}
clearly $h(z)\in\dom(S_{\pi/2})$. Then,
\begin{align*}
g(z) &= h(z) - \frac{\cos\gamma}{s_\gamma(i)}f(i)
	    \left[\hat{R}_{\pi/2}(i)s_0(z)
	    	- iR_{\pi/2}(-i)\hat{R}_{\pi/2}(i)s_0(z)\right]
	    \\[1mm]
	 &= h(z) + b S_{\pi/2}R_{\pi/2}(-i)\hat{R}_{\pi/2}(i)s_0(z),\quad
        b := - \frac{\cos\gamma}{s_\gamma(i)}f(i).
\end{align*}
Moreover,
\begin{align*}
\dual{s_0}{h}_2
	&= \inner{\hat{R}_{\pi/2}(-i)s_0(\cdot)}{f(\cdot)}
	   + i b \inner{\hat{R}_{\pi/2}(i)s_0(\cdot)}{\hat{R}_{\pi/2}(i)s_0(\cdot)}
	   \\[1mm]
	&= - \frac{\pi}{s_{\pi/2}(i)}f(i)
	   + i b \frac{\pi}{\abs{s_{\pi/2}(i)}^2}k(i,i)
	   \\[1mm]
	&= \pi b \left(\frac{1}{\cos\gamma}\frac{s_\gamma(i)}{s_{\pi/2}(i)}
		+i\frac{\frac{s_0(-i)}{s_{\pi/2}(-i)}-\frac{s_0(i)}{s_{\pi/2}(i)}}
		{2i}\right)
	   \\[1mm]
	&= \pi b\left(\tan\gamma+\re\frac{s_{0}(i)}{s_{\pi/2}(i)}\right).
\end{align*}

So far we have shown that $\dom(S_\gamma)\subset\cD$.
To prove that $\cD$ is contained in $\dom(S_\gamma)$, it suffices to show that, for
every $g(z)\in\cD$, one has $g(z)=R_\gamma(i)l(z)$ for some $l(z)\in\cB$. So
let $g(z)\in\cD$, i.\,e.,
\begin{equation*}
g(z) = h(z) + b S_{\pi/2}R_{\pi/2}(-i)\hat{R}_{\pi/2}(i)s_0(z)
\end{equation*}
with $h(z)=R_{\pi/2}(i)f(z)$ and $b\in\C$ such that
\begin{equation}
\label{eq:boring}
\dual{s_0}{h}_2
	=  \dual{\hat{R}_{\pi/2}(-i)s_0}{f}
	= -\frac{\pi}{s_{\pi/2}(i)}f(i)
	=  \pi b\left(\tan\gamma+\re\frac{s_{0}(i)}{s_{\pi/2}(i)}\right).
\end{equation}
Since $S_{\pi/2}R_{\pi/2}(-i) = I - iR_{\pi/2}(-i)$ and
$R_{\pi/2}(-i)\hat{R}_{\pi/2}(i)=R_{\pi/2}(i)\hat{R}_{\pi/2}(-i)$,
one has
\begin{equation}
\label{eq:from-dom-to-dom-1}
g(z) = R_{\pi/2}(i) \left(f(z) - i b\hat{R}_{\pi/2}(-i)s_0(z)\right)
	   +  b \hat{R}_{\pi/2}(i)s_0(z).
\end{equation}
Due to Lemma~\ref{lem:resolvent-on-s} and Proposition~\ref{lem:quasi-krein},
the first term above becomes
\begin{multline*}
\text{1st term}
	= R_{\gamma}(i) \left(f(z) + i\pi b\frac{k(z,i)}{s_{\pi/2}(-i)}\right)
	\\
	- \frac{1}{\tan\gamma+\frac{s_{0}(i)}{s_{\pi/2}(i)}}
	  \inner{\hat{R}_{\pi/2}(-i)s_0(\cdot)}{f(\cdot)
	- i b\hat{R}_{\pi/2}(-i)s_0(\cdot)}\hat{R}_{\pi/2}(i)s_0(z).
\end{multline*}
However,
\begin{align*}
\inner{\hat{R}_{\pi/2}(-i)s_0(\cdot)}{f(\cdot)
	- i b\hat{R}_{\pi/2}(-i)s_0(\cdot)}
&= -\frac{\pi}{s_{\pi/2}}f(i) - ib\frac{\pi^2}{s_{\pi/2}(i)s_{\pi/2}(-i)}k(i,i)
	\\[1mm]
&= \dual{s_0}{h}_2 + i\pi b \im\frac{s_0(i)}{s_{\pi/2}(i)}
	\\[1mm]
&= \pi b \tan\gamma + \pi b \frac{s_0(i)}{s_{\pi/2}(i)}.
\end{align*}
Thus,
\begin{equation}
\label{eq:from-dom-to-dom-2}
\text{1st term}
= R_{\gamma}(i) \left(f(z) + i\pi b\frac{k(z,i)}{s_{\pi/2}(-i)}\right)
	- b \hat{R}_{\pi/2}(i)s_0(z),
\end{equation}
which shows that $g(z)\in\dom(S_{\gamma})$.
\end{proof}

\begin{lemma}
For every $g(z)\in\dom(S^*)$,
\begin{equation}
\label{eq:sigma_0}
\dual{s_0}{g}_\cF = \dual{s_0}{h}_2
	- \pi b \re\frac{s_{0}(i)}{s_{\pi/2}(i)},
\end{equation}
with $h(z)\in\dom(S_{\pi/2})$ and $b\in\C$ related to $g(z)$ by the
(unique) decomposition
$g(z) = h(z) + b S_{\pi/2}R_{\pi/2}(-i)\hat{R}_{\pi/2}(i)s_0(z)$.
\end{lemma}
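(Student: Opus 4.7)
The plan is to exploit linearity. Setting $\phi(z) := S_{\pi/2}R_{\pi/2}(-i)\hat{R}_{\pi/2}(i)s_0(z)$, the decomposition of $g(z)$ yields
\begin{equation*}
\dual{s_0}{g}_\cF = \dual{s_0}{h}_\cF + b\,\dual{s_0}{\phi}_\cF,
\end{equation*}
so the proof reduces to two tasks: (i) show $\dual{s_0}{h}_\cF = \dual{s_0}{h}_2$ for $h(z)\in\dom(S_{\pi/2})$, and (ii) compute $\dual{s_0}{\phi}_\cF$ and check that it equals $-\pi\re\frac{s_0(i)}{s_{\pi/2}(i)}$.

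Task (i) is immediate from Theorem~\ref{thm:b-minus-2-is-not-dB}: the function $s_0(z)\in\assoc\cB$, regarded via its class modulo $\cM_0^{(\pi/2)}$ as the corresponding element of $\cB_{-2}^{(\pi/2)}$, pairs with any $h(z)\in\dom(S_{\pi/2}) = \cB_{+2}^{(\pi/2)}$ precisely through $\dual{s_0}{h}_2 = \dual{s_0}{h}_\cF$.

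For task (ii) I first rewrite $\phi(z)$ explicitly. Lemma~\ref{lem:resolvent-on-s} gives $\hat{R}_{\pi/2}(i)s_0(z) = -\tfrac{\pi}{s_{\pi/2}(i)}k(z,-i)$, and combined with the operator identity $S_{\pi/2}(S_{\pi/2}+iI)^{-1} = I - iR_{\pi/2}(-i)$ this produces
\begin{equation*}
\phi(z) = -\frac{\pi}{s_{\pi/2}(i)}\bigl[k(z,-i) - iR_{\pi/2}(-i)k(z,-i)\bigr].
\end{equation*}
For the first summand, the reproducing property $\dual{k(\cdot,z)}{f}_\cF = f(z)$ in $\cF_{-1}$ together with the fact that $s_0$ is real entire yields $\dual{s_0}{k(\cdot,-i)}_\cF = \cc{s_0(-i)} = s_0(i)$. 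For the second summand, $R_{\pi/2}(-i)k(\cdot,-i)$ belongs to $\dom(S_{\pi/2})$, so task (i) applies and converts the $\cF$-pairing into a $2$-pairing; then \eqref{eq-2pairing-expression} together with a second application of Lemma~\ref{lem:resolvent-on-s} reduces the computation to evaluating $(S_{\pi/2}-iI)(S_{\pi/2}+iI)^{-1}k(\cdot,-i)$ at $z=i$, which by \eqref{eq:cayley-transf} (or Lemma~\ref{lem:quotient-diff-reprod-kernel}) collapses to a scalar multiple of $k(i,i)$.

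The main obstacle is the final algebraic simplification. Assembling the two summands produces
\begin{equation*}
\dual{s_0}{\phi}_\cF = -\pi\frac{s_0(i)}{s_{\pi/2}(i)} - \frac{i\pi^2}{\abs{s_{\pi/2}(i)}^2}k(i,i),
\end{equation*}
and one must verify that, upon substituting \eqref{eq:k-given-by-s} for $k(i,i)$ in terms of $s_0(\pm i)$ and $s_{\pi/2}(\pm i)$ and using the reality relations $s_0(-i)=\cc{s_0(i)}$, $s_{\pi/2}(-i)=\cc{s_{\pi/2}(i)}$, the second term equals exactly $i\pi\im\frac{s_0(i)}{s_{\pi/2}(i)}$. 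This cancels the imaginary part of the first term, leaving $-\pi\re\frac{s_0(i)}{s_{\pi/2}(i)}$ and thereby establishing \eqref{eq:sigma_0}.
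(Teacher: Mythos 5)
Your proof is correct, and its skeleton is the paper's: split $\dual{s_0}{g}_\cF$ by linearity, identify $\dual{s_0}{h}_\cF$ with $\dual{s_0}{h}_2$ for $h(z)\in\dom(S_{\pi/2})$, and show that $\phi(z):=S_{\pi/2}R_{\pi/2}(-i)\hat{R}_{\pi/2}(i)s_0(z)$ satisfies $\dual{s_0}{\phi}_\cF=-\pi\re\frac{s_0(i)}{s_{\pi/2}(i)}$. The only divergence is in how $\phi(z)$ is processed. The paper uses the symmetric partial-fraction identity $S_{\pi/2}R_{\pi/2}(-i)\hat{R}_{\pi/2}(i)=\tfrac12\hat{R}_{\pi/2}(-i)+\tfrac12\hat{R}_{\pi/2}(i)$, so that two applications of Lemma~\ref{lem:resolvent-on-s} and the duality formula give $-\tfrac{\pi}{2}\bigl(\tfrac{s_0(i)}{s_{\pi/2}(i)}+\cc{\tfrac{s_0(i)}{s_{\pi/2}(i)}}\bigr)$ at once, with no appearance of $k(i,i)$. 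Your route via $S_{\pi/2}R_{\pi/2}(-i)=I-iR_{\pi/2}(-i)$ is equally valid but pushes the symmetry to the end: after converting the $\dom(S_{\pi/2})$ piece to the $2$-pairing and using \eqref{eq-2pairing-expression}, the Cayley identity \eqref{eq:cayley-transf} and Lemma~\ref{lem:resolvent-on-s} again, you still must invoke \eqref{eq:k-given-by-s} together with $s_0(-i)=\cc{s_0(i)}$, $s_{\pi/2}(-i)=\cc{s_{\pi/2}(i)}$ to recognize $-\tfrac{i\pi^2}{\abs{s_{\pi/2}(i)}^2}k(i,i)$ as $i\pi\im\tfrac{s_0(i)}{s_{\pi/2}(i)}$ and cancel the imaginary part (your bookkeeping here checks out). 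In short, the paper's splitting buys a shorter, manifestly real computation; yours costs one extra kernel identity but is otherwise the same argument.
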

\begin{proof}
An obvious computation shows that
\begin{equation*}
S_{\pi/2}R_{\pi/2}(-i)\hat{R}_{\pi/2}(i)
	= \frac12\hat{R}_{\pi/2}(-i) + \frac12\hat{R}_{\pi/2}(i)
\end{equation*}
Then, recalling Lemma~\ref{lem:resolvent-on-s},
\begin{multline*}
\dual{s_0}{S_{\pi/2}R_{\pi/2}(-i)\hat{R}_{\pi/2}(i)s_0}_\cF
\\[1mm]
\begin{aligned}
	&= -\frac{\pi}{2s_{\pi/2}(-i)}\dual{s_0}{k(\cdot,i)}_\cF
	  -\frac{\pi}{2s_{\pi/2}(i)}\dual{s_0}{k(\cdot,-i)}_\cF
	\\[1mm]
	&= -\pi\re\frac{s_{0}(i)}{s_{\pi/2}(i)}.
\end{aligned}
\end{multline*}
The assertion
now follows straightforwardly.
\end{proof}

For the statement of our last result below, we recall that
$S_{\pi/2}:\cB_{+2}^{(\pi/2)}\to\cB$ has a unique contractive continuation
$\hat{S}_{\pi/2}:\cB\to\cB_{-2}^{(\pi/2)}$.

\begin{theorem}
\label{thm:action-s-gamma}
Let $\tilde{S}_{\pi/2}$ be the restriction of $\hat{S}_{\pi/2}$ to
$\dom(S^*)$. For $\gamma\in(0,\pi)$, define 
$\tilde{S}_\gamma:\dom(S^*)\to\cB_{-2}^{(\pi/2)}$ by
\begin{equation*}
\tilde{S}_\gamma := \tilde{S}_{\pi/2}
				- \frac{\cot\gamma}{\pi}\dual{s_0}{\cdot}_{\cF}s_0(z).
\end{equation*}
Then, $S_\gamma$ as described by \eqref{eq:definition-selfadj} is the
restriction of $\tilde{S}_\gamma$ to $\dom(S_\gamma)$.
\end{theorem}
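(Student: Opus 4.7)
The plan is to verify the equality $\tilde{S}_\gamma g = S_\gamma g$ in $\cB_{-2}^{(\pi/2)}$ for each $g\in\dom(S_\gamma)$ by testing both sides against an arbitrary $h\in\dom(S_{\pi/2}) = \cB_{+2}^{(\pi/2)}$ through the duality pairing $\dual{\cdot}{\cdot}_2$. Since this pairing separates points of $\cB_{-2}^{(\pi/2)}$, the theorem reduces to a single scalar identity parametrized by $h$.

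To unpack both sides: since $\tilde{S}_{\pi/2}$ is the restriction of the contractive continuation $\hat{S}_{\pi/2}\colon\cB\to\cB_{-2}^{(\pi/2)}$, an approximation argument using the density of $\dom(S_{\pi/2})$ in $\cB$, combined with the selfadjointness of $S_{\pi/2}$, yields $\dual{\tilde{S}_{\pi/2} g}{h}_2 = \inner{g}{S_{\pi/2} h}$; on the other side, $\dual{S_\gamma g}{h}_2 = \inner{S_\gamma g}{h}$ because $S_\gamma g\in\cB$. Subtracting, the theorem becomes the boundary-form identity
\begin{equation*}
\inner{g}{S_{\pi/2} h} - \inner{S_\gamma g}{h} = \frac{\cot\gamma}{\pi}\,\dual{s_0}{g}_\cF\,\dual{s_0}{h}_2.
\end{equation*}

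Next, I would invoke Theorem~\ref{teo:dom-selfadj-alternative} to decompose $g = h_g + b_g\,\xi$, with $\xi := S_{\pi/2} R_{\pi/2}(-i)\hat{R}_{\pi/2}(i) s_0$, $h_g\in\dom(S_{\pi/2})$, and $b_g\in\C$ subject to the compatibility condition stated there. Feeding that condition into \eqref{eq:sigma_0} immediately yields $\dual{s_0}{g}_\cF = \pi b_g \tan\gamma$, so the right-hand side of the boundary-form identity collapses to $b_g\,\dual{s_0}{h}_2$. On the left, the $h_g$-contribution vanishes by selfadjointness of $S_{\pi/2}$, and the theorem reduces to showing
\begin{equation*}
\inner{\xi}{S_{\pi/2} h} - \inner{S^*\xi}{h} = \dual{s_0}{h}_2.
\end{equation*}

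This last identity is established by direct computation. The identity $S_{\pi/2} R_{\pi/2}(-i)\hat{R}_{\pi/2}(i) = \tfrac{1}{2}\hat{R}_{\pi/2}(-i) + \tfrac{1}{2}\hat{R}_{\pi/2}(i)$, combined with Lemma~\ref{lem:resolvent-on-s}, represents $\xi$ as an explicit linear combination of $k(\cdot,i)$ and $k(\cdot,-i)$, and since $S^* k(\cdot,\pm i) = \mp i\,k(\cdot,\pm i)$ the same holds for $S^*\xi$. Writing $h = R_{\pi/2}(i) f$ with $f = (S_{\pi/2}-iI) h$, the reproducing-kernel property converts both inner products into expressions in $f(\pm i)$ and $h(\pm i)$; the formula for $h(-i)$ in terms of $f(\pm i)$ coming from \eqref{eq:dom-selfadj} produces the cancellation that leaves $-\tfrac{\pi}{s_{\pi/2}(i)} f(i)$, which coincides with $\dual{s_0}{h}_2$ by the computation carried out in the proof of Proposition~\ref{prop:S-as-restriction}. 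The main obstacle is precisely this bookkeeping: the cancellations depend on the real-entireness of $s_{\pi/2}(z)$ (so that $\cc{s_{\pi/2}(i)} = s_{\pi/2}(-i)$) and on careful tracking of conjugates in the inner products.
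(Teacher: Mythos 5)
Your argument is correct, but it follows a genuinely different route from the paper's. The paper proves the theorem by brute force in $\cB$: it computes $\tilde{S}_\gamma g$ explicitly (via the decomposition of Theorem~\ref{teo:dom-selfadj-alternative}, the identity \eqref{eq:sigma_0}, Lemma~\ref{lem:resolvent-on-s}, and the operator identity $\hat{S}_{\pi/2}S_{\pi/2}R_{\pi/2}(-i)\hat{R}_{\pi/2}(i)=\hat{I}-R_{\pi/2}(-i)\hat{R}_{\pi/2}(i)$), computes $S_\gamma g$ from \eqref{eq:definition-selfadj} using $g=R_\gamma(i)\bigl(f+i\pi b\,k(\cdot,i)/s_{\pi/2}(-i)\bigr)$, and then matches the two entire functions through the lengthy kernel identity \eqref{eq:horror}, whose final step the authors themselves call ``a rather tedious computation''. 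You instead argue weakly: pairing against arbitrary $h\in\cB_{+2}^{(\pi/2)}$, using $\dual{\tilde{S}_{\pi/2}g}{h}_2=\inner{g}{S_{\pi/2}h}$ and $S_\gamma\subset S^*$, the theorem collapses (after the same decomposition and \eqref{eq:sigma_0}) to the single Green's-type identity $\inner{\xi}{S_{\pi/2}h}-\inner{S^*\xi}{h}=\dual{s_0}{h}_2$ for $\xi=S_{\pi/2}R_{\pi/2}(-i)\hat{R}_{\pi/2}(i)s_0$, which your kernel computation does verify: with $h=R_{\pi/2}(i)f$ one gets $f(-i)+2ih(-i)=\tfrac{s_{\pi/2}(-i)}{s_{\pi/2}(i)}f(i)$ from \eqref{eq:dom-selfadj}, leaving exactly $-\tfrac{\pi}{s_{\pi/2}(i)}f(i)=\dual{s_0}{h}_2$. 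Your route avoids \eqref{eq:horror} altogether and makes the coupling $\cot\gamma/\pi$ transparent as a boundary form; the paper's route, in exchange, produces explicit kernel formulas for $\tilde{S}_\gamma g$. One bookkeeping caveat: since $\dual{\cdot}{\cdot}_2$ is conjugate-linear in its first argument, your displayed boundary-form identity should carry $\cc{\dual{s_0}{g}_\cF}$ on the right, and correspondingly the factor pulled out on the left is $\cc{b_g}$ rather than $b_g$; these two slips compensate, so the reduced identity you state, and hence the proof, is correct.
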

\begin{proof}
If $g(z)\in\dom(S_\gamma)$, then
\begin{equation*}
g(z) = h(z) + b S_{\pi/2}R_{\pi/2}(-i)\hat{R}_{\pi/2}(i)s_0(z)
\end{equation*}
with $h(z)\in\dom(S_{\pi/2})$ and $b\in\C$, both related to each other
in accordance with
Theorem~\ref{teo:dom-selfadj-alternative}. Taking into account
\eqref{eq:sigma_0}, one obtains
\begin{equation*}
\dual{s_0}{g}_\cF
	= \dual{s_0}{h}_{2} - \pi b \re\frac{s_{0}(i)}{s_{\pi/2}(i)}
	= \pi b \tan\gamma.
\end{equation*}
Thus, so far,
\begin{equation}
\label{eq:first-reduction}
\tilde{S}_\gamma g(z)
	= S_{\pi/2}h(z)
	  + b\hat{S}_{\pi/2}S_{\pi/2}R_{\pi/2}(-i)\hat{R}_{\pi/2}(i)s_0(z)
	  - b s_0(z).
\end{equation}
Since
\begin{equation*}
\hat{S}_{\pi/2}S_{\pi/2}R_{\pi/2}(-i)\hat{R}_{\pi/2}(i)
	= \hat{I} - R_{\pi/2}(-i)\hat{R}_{\pi/2}(i)
\end{equation*}
(here we use that $(\hat{S}_{\pi/2} -iI)\hat{R}_{\pi/2}(i)=\hat{I}$,
the latter being the identity operator in $\cB_{-2}^{(\pi/2)}$), equation
\eqref{eq:first-reduction}
becomes
\begin{align*}
\tilde{S}_\gamma g(z)
	&= S_{\pi/2}h(z) - b R_{\pi/2}(-i)\hat{R}_{\pi/2}(i) s_0(z)
	\\[1mm]
	&= S_{\pi/2}h(z) + i\frac{\pi b}{2}
	   \left(\frac{k(z,i)}{s_{\pi/2}(-i)}-\frac{k(z,-i)}{s_{\pi/2}(i)}\right);
\end{align*}
the last equality follows from the resolvent identity
\eqref{eq:ext-res-identity} and Lemma~\ref{lem:resolvent-on-s}.
On the other hand, since $h(z)=R_{\pi/2}(i)f(z)$ for some $f(z)\in\cB$, one
has (see \eqref{eq:from-dom-to-dom-1} and \eqref{eq:from-dom-to-dom-2})
\begin{equation*}
g(z) = R_{\gamma}(i)\left(f(z) + i\pi b\frac{k(z,i)}{s_{\pi/2}(-i)}\right)\,.
\end{equation*}
Therefore,
\begin{align*}
S_\gamma g(z)
	&=  z h(z) + zb S_{\pi/2}R_{\pi/2}(-i)\hat{R}_{\pi/2}(i)s_0(z)
	\\[1mm]
	&\phantom{= zh(z)\ }
	    + \frac{s_{\gamma}(z)}{s_{\gamma}(i)}
		    \left(f(i) + i\pi b\frac{k(i,i)}{s_{\pi/2}(-i)}\right)
	\\[1mm]
	&= S_{\pi/2} h(z)
		- z \frac{\pi b}{2}\left(\frac{k(z,i)}{s_{\pi/2}(-i)}
		+   \frac{k(z,-i)}{s_{\pi/2}(i)}\right)
	\\[1mm]
	 &\phantom{= S_{\pi/2} h(z)\ } + \left(\frac{s_{\gamma}(z)}{s_{\gamma}(i)}
	   			- \frac{s_{\pi/2}(z)}{s_{\pi/2}(i)}\right)f(i)
	   			+ i\pi b \frac{k(i,i)}{s_{\pi/2}(-i)}
	   			  \frac{s_{\gamma}(z)}{s_{\gamma}(i)}.
\end{align*}
Thus, it suffices to show that
\begin{multline}
\label{eq:horror}
0 =  - i\frac{\pi b}{2}
	   \left(\frac{k(z,i)}{s_{\pi/2}(-i)}-\frac{k(z,-i)}{s_{\pi/2}(i)}\right)
	 - z \frac{\pi b}{2}\left(\frac{k(z,i)}{s_{\pi/2}(-i)}
	     		+   \frac{k(z,-i)}{s_{\pi/2}(i)}\right)
	\\[1mm]
     + \left(\frac{s_{\gamma}(z)}{s_{\gamma}(i)}
     	   			- \frac{s_{\pi/2}(z)}{s_{\pi/2}(i)}\right)f(i)
     	   			+ i\pi b \frac{k(i,i)}{s_{\pi/2}(-i)}
     	   			  \frac{s_{\gamma}(z)}{s_{\gamma}(i)}.
\end{multline}
Note that
\begin{equation*}
\frac{s_{\gamma}(z)}{s_{\gamma}(i)}
	   			- \frac{s_{\pi/2}(z)}{s_{\pi/2}(i)}
= - \frac{\pi\cos\gamma}{s_\gamma(i)}(z-i)\frac{k(z,-i)}{s_{\pi/2}(i)}
\end{equation*}
so, in view of \eqref{eq:boring},
\begin{align*}
\left(\frac{s_{\gamma}(z)}{s_{\gamma}(i)}
	   			- \frac{s_{\pi/2}(z)}{s_{\pi/2}(i)}\right)\!f(i)
&= \pi b \frac{\tan\gamma + \re\frac{s_0(i)}{s_{\pi/2}(i)}}
			 {\tan\gamma + \frac{s_0(i)}{s_{\pi/2}(i)}}
			 (z-i)\frac{k(z,-i)}{s_{\pi/2}(i)}
\\
&= \pi b(z-i)\frac{k(z,\!-i)}{s_{\pi/2}(i)}
	- i\pi b \frac{\im\frac{s_0(i)}{s_{\pi/2}(i)}}
			      {\tan\gamma + \frac{s_0(i)}{s_{\pi/2}(i)}}
				  (z-i)\frac{k(z,\!-i)}{s_{\pi/2}(i)}.
\end{align*}
Also,
\begin{equation*}
i\pi b \frac{k(i,i)}{s_{\pi/2}(-i)}\frac{s_{\gamma}(z)}{s_{\gamma}(i)}
	= -i b\frac{s_\gamma(z)}{\cos\gamma}
	    \frac{\im\frac{s_0(i)}{s_{\pi/2}(i)}}
	    {\tan\gamma+\frac{s_0(i)}{s_{\pi/2}(i)}}.
\end{equation*}
Thus, the sum of the last two terms in \eqref{eq:horror} amounts to
\begin{align*}
\text{3rd term} +\text{4th term}
	&= \pi b(z-i)\frac{k(z,\!-i)}{s_{\pi/2}(i)}
		- ib\im\frac{s_0(i)}{s_{\pi/2}(i)}s_{\pi/2}(z).
	\\[1mm]
	&= b\left(s_{\pi/2}(z)\re\frac{s_0(i)}{s_{\pi/2}(i)}-s_0(z)\right).
\end{align*}
Additionally, a rather tedious computation shows that the first two terms yield
\begin{equation*}
\text{1st term}+\text{2nd term}
	= - b\left(s_{\pi/2}(z)\re\frac{s_0(i)}{s_{\pi/2}(i)}-s_0(z)\right),
\end{equation*}
thus completing the proof.
\end{proof}

\paragraph*{Acknowledgments.}

Part of this work was done while J. H. T. visited IIMAS--UNAM 
(Mexico) in the winter of 2016. He deeply thanks
them for their kind hospitality.

\end{document}